\tikzset{
    myarrow/.style={->, >=latex', shorten >=1pt, thick,orange}
}
\newcommand{\N}{\mathbb{N}}
\newcommand{\Z}{\mathbb{Z}}
\newcommand{\R}{\mathbb{R}}
\newcommand{\K}{\mathbb{K}}
\newcommand{\mc}{\mathcal}
\newcommand{\mb}{\mathbb}
\newcommand{\mf}{\mathfrak}
\renewcommand{\b}{\beta}
\renewcommand{\d}{\delta}
\newcommand{\e}{\varepsilon}
\newcommand{\s}{\sigma}
\newcommand{\ph}{\varphi}
\renewcommand{\t}{\tau}
\newcommand{\Om}{\Omega}
\newcommand{\La}{\Lambda}
\DeclareMathOperator{\id}{id}
\newcommand{\im}{\operatorname{im}}
\newcommand{\set}[1]{\left\{#1\right\}}
\renewcommand{\r}{\rightarrow}
\newcommand{\xr}{\xrightarrow}
\newcommand{\xhr}{\xhookrightarrow}
\newcommand{\hr}{\hookrightarrow}
\newcommand{\p}{\partial}
\newcommand\aeq{\stackrel{\smash{\scriptscriptstyle\rightarrow}}{=}}
\newcommand{\norm}[1]{\|#1\|} 
\renewcommand{\emptyset}{\varnothing}
\newcommand{\pow}{\operatorname{Pow}}
\newcommand{\G}{\mathfrak{G}}		
\newcommand{\V}{\mathcal{V}}		 
\newcommand{\Ncal}{\mathcal{N}}		
\newcommand{\Rsc}{\mathscr{R}}		
\newcommand{\dn}{d_{\mathcal{N}}}	
\newcommand{\db}{d_{\operatorname{B}}} 
\newcommand{\di}{d_{\operatorname{I}}} 
\newcommand{\dis}{\operatorname{dis}}
\newcommand{\nr}{\operatorname{nr}}
\newcommand{\dgm}{\operatorname{Dgm}}
\newcommand{\rank}{\operatorname{rank}}
\newcommand{\card}{\operatorname{card}}
\newcommand{\pers}{{\operatorname{\mathbf{Pers}}}}
\newcommand{\U}{\mathcal{U}}
\newtheorem{theorem}{Theorem}
\newtheorem{proposition}[theorem]{Proposition}
\theoremstyle{definition}
\newtheorem{definition}{Definition}
\newtheorem{example}[theorem]{Example}
\newtheorem{remark}[theorem]{Remark}
\newtheorem{conjecture}{Conjecture}
\newtheorem{claim}{Claim}
\newcommand{\ceil}[1]{\left \lceil{#1}\right \rceil }
\newenvironment{subproof}[1][\proofname]{%
  \begin{proof}[#1]%
}{%
  \end{proof}%
}
\title{Persistent Path Homology of Directed Networks}
\author{Samir Chowdhury and Facundo M\'{e}moli}
\begin{document}

\date{\today}
\email{chowdhury.57@osu.edu,memoli@math.osu.edu}

\address{Department of Mathematics,
The Ohio State University. 
100 Math Tower,
231 West 18th Avenue, 
Columbus, OH 43210. 
Phone: (614) 292-4975,
Fax: (614) 292-1479 }

\begin{abstract}
While standard persistent homology has been successful in extracting information from metric datasets, its applicability to more general data, e.g. directed networks, is hindered by its natural insensitivity to asymmetry. We study a construction of homology of digraphs due to Grigor'yan, Lin, Muranov and Yau, and extend this construction to the persistent framework. The result, which we call persistent path homology, can provide information about the digraph structure of a directed network at varying resolutions. 
Moreover, this method encodes a rich level of detail about the asymmetric structure of the input directed network. We test our method on both simulated and real-world directed networks and conjecture some of its characteristics.
\end{abstract}

\maketitle

\tableofcontents

\section{Introduction}
\label{sec:intro}
In recent years, the advent of sophisticated data mining tools has led to rapid growth of network datasets in the sciences. The recently completed Human Connectome Project (2010-2015, \url{http://www.humanconnectome.org/}), aimed at mapping the network structure of the human brain, is just one example of a large-scale network data acquisition project. The availability of such network data coincides with a time of steady growth of the mathematical theory of \emph{persistent homology}, which aims to study the ``shape" of data and thus appears to be a good candidate for analysing network structure. This connection is being developed rapidly \cite{sizemore2015classification, sizemore2016closures, petri2014homological, petri2013topological, giusti2015clique, giusti2016two, dlotko2016topological, masulli2016topology, dowker-arxiv}, but this exploration is far from complete. 
From a theoretical perspective, there still exists a gap in our understanding of how to feed network data, which may be asymmetric, into the method of persistent homology, which, in conventional understanding, takes symmetric data as input. The ``easy" solution of forcing a symmetrization on the data may cause its network identity to be lost, and thus a more satisfactory solution would have to be sensitive to asymmetry. This question has received some attention in \cite{dlotko2016topological, masulli2016topology, dowker-arxiv}, and in this paper, we complement the existing literature by presenting a pipeline for associating a persistent homology signature to network data that is sensitive to asymmetry.

\begin{figure}
\centering
\includegraphics[scale=0.5]{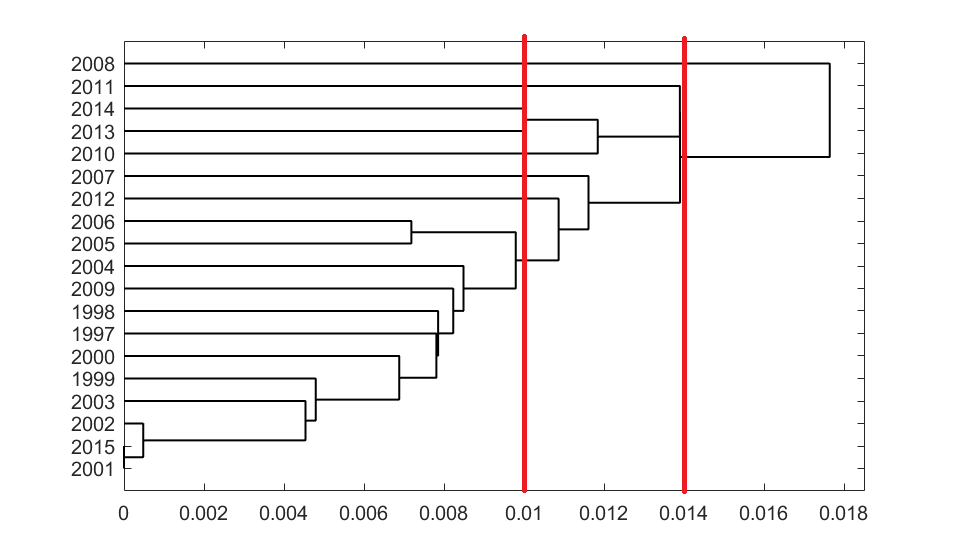}
\caption{Single linkage dendrogram obtained from bottleneck distances between 1-dimensional PPDs of the networks of interaction of the U.S. economic sectors data for 19 consecutive years. The vertical slice at resolution 0.014 shows the separation of 2008 (the height of the financial crisis) from other years. The slice at resolution 0.01 shows how the years leading up to the financial crisis are clustered together. Details are provided in \S\ref{sec:us-econ}.}
\label{fig:econ}
\end{figure}

Closely related to the problem of extending persistent homology constructions to the setting of asymmetric networks is the problem of defining \emph{hierarchical clustering methods} that accept network data and are sensitive to the possible asymmetry in them. It has been observed \cite{carlsson2013axiomatic,clust-net} that in contrast to the symmetric setting \cite{clust-um,carlsson2013classifying} (exemplified by finite metric spaces) in which essentially a unique method satisfies certain axioms, in the asymmetric setting a whole continuum of functorial methods satisfies similar axioms. 

\subsubsection*{The pipeline in the case of symmetric data.}
The standard pipeline \cite{dgh-pers} for computing persistent homology of a dataset is as follows:
\begin{itemize}
\item Given a finite metric space representation of a dataset, compute a nested sequence of simplicial complexes (called a \emph{filtered simplicial complex} or a \emph{filtration}). 
\item Apply the simplicial homology functor with field coefficients (typically $\mathbb{Z}_2$) to obtain a sequence of vector spaces with linear maps, called a \emph{persistent vector space}.
\item Compute the \emph{persistence diagram} (also called a \emph{barcode}) of this persistent vector space.
\end{itemize}

Recall that a \emph{finite metric space} is defined to be a finite set $X$ and a metric function $d_X:X\times X \r \R$ satisfying the following for any $x,y,z\in X$: (1) $d_X(x,y)\geq 0$, (2) $d_X(x,y)=0 \iff x=y$, (3) $\boldsymbol{d_X(x,y)=d_X(y,x)}$ (symmetry), and (4) $d_X(x,z)\leq d_X(x,y)+d_X(y,z)$ (triangle inequality). Also recall that given a finite set $X$, a \emph{simplicial complex} on $X$ is a subcollection of the nonempty subsets of $X$ such that whenever $\s=[x_0,x_1,\ldots,x_p]$ belongs to this collection, any $\t\subseteq \s$ belongs to the collection as well. For each $p\in \Z_+$, any element of a simplicial complex consisting of $p+1$ vertices of the underlying set is called a \emph{$p$-simplex}. For $p>0$, different orderings of the vertex set of a $p$-simplex are said to be equivalent if they differ by an even permutation. The two resulting equivalence classes are then referred to as \emph{orientations} of that particular simplex \cite[pg. 26]{munkres-book}. 
We also recall that the most common way to produce a filtration from a metric space is the \emph{Rips filtration} \cite{hausmann1995vietoris}: given a metric space $(X,d_X)$ and any $\d \geq 0$, one defines $\mf{R}^\d_X:=\set{\s \subseteq X : \s \neq \emptyset, \max_{x,x'\in \s}d_X(x,x') \leq \d}.$
We do not recall terms related to persistence here, and instead refer the reader to \S\ref{sec:back-ph} for details.

\subsubsection*{The case of asymmetric data.}
Consider now the problem of letting the input dataset be a (directed/asymmetric) network. For the purposes of this paper, a \emph{network} is a finite set $X$ together with a \emph{weight function} $A_X:X\times X \r \R_+$ such that for any $x,y\in X$: (1) $A_X(x,y)\geq 0$, and (2) $A_X(x,y)=0 \iff x=y$. We will refer to such an object as a \emph{dissimilarity network} \cite{carlsson2013axiomatic,clust-net,nets-allerton,nets-icassp}, and note that such objects have been of interest since at least \cite{hubert1973min}. Neither of the two preceding conditions is essential to our constructions, but we still impose them for simplicity. The key element that we wish to address is the lack of a symmetry assumption. Some ways of getting around this obstruction are as follows:
\begin{itemize}
\item Assume that the dataset is symmetric, and apply the Rips filtration \cite{petri2013topological, giusti2015clique}.
\item Use the Rips filtration on an asymmetric network, but notice that given a network $(X,A_X)$ and two points $x,x'\in X$, the simplex $[x,x']$ appears at $\max(A_X(x,x'),A_X(x',x))$ \cite{dowker-asilo}. Stated differently, the Rips filtration forces a \emph{max-symmetrization} step on the input data.
\item Use an alternative construction called the Dowker filtration, which produces a filtered simplicial complex in a manner that is sensitive to asymmetry \cite{dowker-arxiv}.
\end{itemize}

In each case, the output is a filtered simplicial complex, to which one then applies the simplicial homology functor and obtains a persistent vector space. The entire sequence of events is referred to as the \emph{Rips/Dowker persistent homology method}, depending on the choice of filtration. Unfortunately, there is an implicit symmetrization which is forced at the step of applying the homology functor, in the sense that simplicial homology \emph{does not distinguish} between simplices of the form $[x,x']$ and $[x',x]$---both are mapped to the same 1-dimensional linear subspace at the vector space level, so it does not matter if one, and not the other, is present in the simplicial complex. More generally, the different orderings of the vertices of a simplex $\s$ belong to exactly two equivalence classes (marked by a positive and a negative orientation), and upon passing to vector spaces, the two equivalence classes map to $\pm v_\s$, where $v_\s$ is the vector space representative of $\s$. Details about this informal argument can be found in \cite[pg. 27]{munkres-book}. Also note that this issue of symmetrization is not exclusive to the persistent framework; it arises immediately in standard (oriented) simplicial homology.

\subsubsection*{Sensitivity to asymmetry.}
There are different degrees to which a persistent homology method may be sensitive to asymmetry. As a first test for sensitivity to asymmetry, one can take a network and ``reverse" the weights on the edges between a given pair of nodes, and verify that the persistent vector spaces of the original and modified networks are different. The Rips method does not track asymmetry even to the level of simplicial complexes---one can verify that given two copies of a network that differ by an edge reversal, the corresponding Rips filtrations are always equivalent \cite[Remark 19]{dowker-arxiv}. 
On the other hand, there are explicit examples of networks for which such an edge reversal gives rise to a different Dowker filtration \cite[Remark 19]{dowker-arxiv}. Thus the Dowker method tracks asymmetry to the level of simplicial complexes, but \emph{not} to the level of vector spaces, by the discussion above.

One way to track the original asymmetry of a network down to the vector space level, while still remaining rooted in a simplicial construction, would be to consider \emph{ordered simplices} \cite[pg. 76]{munkres-book}. For $p\in \Z_+$, an ordered $p$-simplex over a set $X$ is an ordered $(p+1)$-tuple $(x_0,\ldots,x_p)$, possibly with repeated vertices. Homological constructions can be defined on ordered simplicial complexes in a natural way \cite{turner}, with the following important distinction: in an ordered simplicial complex, different orderings of the same vertex set correspond to different ordered simplices, and these in turn correspond to \emph{linearly independent} vector space representatives upon applying homology. This avoids the symmetrization that occurs with homological constructions on a standard (oriented) simplicial complex, but it comes at a cost: the dimension of the resulting vector space grows very quickly with respect to the cardinality of the underlying set. 
However, steady increases in computational capability have led to a revived interest in this approach. In \cite{dlotko2016topological}, the authors used homology on an ordered simplicial complex called a \emph{directed flag} (or \emph{Rips} or \emph{clique}) \emph{complex} to study a digital reconstruction of a neuronal microcircuit released by the Blue Brain Project (\url{http://bluebrain.epfl.ch/}). Another notable application is in \cite{masulli2016topology}, where the authors use the same construction to study the dynamics of artificial neural networks. 
Even more recently \cite[Theorem 11]{turner}, the notion of a directed flag complex has been cast in the persistent framework. More specifically, it was shown that the method of obtaining a persistent vector space from a filtration of directed flag complexes is \emph{stable} (i.e. robust to noise), and hence a valid method of associating persistent homology signatures to asymmetric objects. However, to our knowledge, there is currently no practical implementation of this theoretical persistent framework.

We study an alternative construction of persistence for asymmetric networks that tracks asymmetry down to the vector space level, in the sense described above. Given a directed network, one may produce a \emph{digraph} (i.e. \emph{directed graph}) \emph{filtration} by scanning a threshold parameter $\d \in \R$ and removing the edges with weight $> \d$. Then, given a valid notion of homology for digraphs, one may attempt to construct a persistent homology for digraphs and prove that this method is stable. In a series of papers released between 2012 and 2014, A. Grigor'yan, Y. Lin, Y. Muranov, and S.T. Yau formalized a notion of homology on digraphs called \emph{path homology} \cite{grigor-arxiv}, as well as a homotopy theory for digraphs that is compatible with path homology \cite{grigor-htpy} and the homotopy theory for undirected graphs proposed in \cite{barcelo2001foundations, babson2006homotopy}. This notion of path homology is the central object of study in our work, and our contribution is to study the notion of persistent homology that arises from this construction. While there may be several notions of homology of a digraph, in \cite[\S1]{grigor-arxiv} the authors argue that path homology has demonstrable advantages over other such notions. 
For example, one approach would be to define the clique complex of the underlying undirected graph, and then consider the homology of the clique complex \cite{ivash}.  However, as noted in \cite{grigor-arxiv}, functorial properties such as the K\"unneth formula may fail for this construction---a cycle graph on four nodes has nontrivial 1-dimensional homology, but the Cartesian product of two such cycle graphs has trivial 2-dimensional homology in this clique complex formulation. On the other hand, the path homology construction of \cite{grigor-arxiv} satisfies a K\"unneth formula. We do not focus on comparing the different constructions of homology for graphs, but we list some desirable properties of path homology:
\begin{enumerate}
\item Nontrivial homologies are possible in all dimensions,
\item Functoriality, e.g. digraph maps induce linear maps between path homologies (essential for a persistence framework),
\item Richer structure at the vector space level than those obtained from simplicial constructions. 
\end{enumerate}

To elucidate this last point, we point the reader to Figure \ref{fig:4-node}, where we illustrate two important \emph{motifs} that appear in systems biology: the bi-parallel and bi-fan motifs \cite{milo2002network}. Even the very general directed flag complex construction is unable to distinguish between these two motifs upon passing to homology, whereas path homology is able to tell these two motifs apart. We consider this particular example to be a ``minimal example" showing the difference between path homology and simplicial constructions. We give a detailed explanation for this situation in the discussion linked to Figure \ref{fig:4-node}, but informally, the distinction occurs because the directed flag complex homology passes through a simplicial setting, whereas path homology remains purely in the algebraic setting. This allows for the appearance of richer structures in path homology, giving it greater ability to distinguish between directed structures.

While distinguishing between path homology and directed flag complex homology, it is important to note properties that the two have in common. Notably, given a vertex set $X$, two vertices $x,x'\in X$, and two edges of the form $x\r x'$ and $x'\r x,$ path homology assigns linearly independent vector space representatives to the two edges. One can verify that this is analogous to the situation described for the directed flag complex, and that the analogy extends for higher dimensions as well. Thus both path homology and directed flag complex homology track asymmetry in the input data down to the level of vector spaces.

\subsection{Our contributions}

We describe a \emph{persistent} framework for path homology (\S\ref{sec:pph}), i.e. we define a \emph{persistent path homology \emph{(PPH)}} method that takes an asymmetric network as input and produces a \emph{path persistence diagram \emph{(PPD)}} as output. We show that this method is stable with respect to a network distance $\dn$ that is analogous to the Gromov-Hausdorff distance between finite metric spaces and has appeared often in recent literature, but also as early as in \cite{clust-net}. By virtue of this stability result, we know that PPH is quantitatively robust to perturbations of input data, and is thus a valid method for data analysis.

In \S\ref{sec:exp} we present the results of testing our method on both simulated and real-world datasets. Our first simulated database is a collection of 16,000 asymmetric networks with weights chosen uniformly at random from the interval $[0,1]$. We computed the 1-dimensional PPD for each network and its transpose, and compared the resulting diagrams via the \emph{bottleneck distance} \cite{dgh-pers}. Guided by the results of these experiments, we were able to conjecture and prove the following:

\begin{theorem}\label{thm:transp-inv} 
PPH is transposition-invariant in each dimension. More specifically, let $(X,A_X)$ be a dissimilarity network, and consider its transpose $(X,A_X^\top)$ (here $A_X^\top$ is the transpose of the matrix representing $A_X$). Then the $k$-dimensional persistent path homologies of $(X,A_X)$ and $(X,A_X^\top)$ are isomorphic, for each $k \in \Z_+$.
\end{theorem}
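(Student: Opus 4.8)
The plan is to reduce the statement to a single structural fact about path homology---its invariance under reversal of all edges---and then observe that this invariance is natural with respect to the filtration, so that it upgrades from an isomorphism of path homology groups to an isomorphism of persistent path homologies. First I would record the following elementary but crucial observation about the digraph filtrations. If $\{G^\d\}_{\d}$ denotes the filtration of $(X,A_X)$ obtained by retaining the edge $x\r x'$ precisely when $A_X(x,x')\le \d$, then the corresponding filtration of the transpose $(X,A_X^\top)$ retains $x\r x'$ exactly when $A_X(x',x)\le \d$; hence at every threshold $\d$ it coincides, as a digraph on the same vertex set $X$, with the opposite digraph $(G^\d)^\top$ in which every edge has been reversed. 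Thus it suffices to produce, for each digraph $G$, an isomorphism of path chain complexes $\Om_\bullet(G)\cong \Om_\bullet(G^\top)$ that is defined purely in terms of vertices (independently of $\d$), so that it automatically commutes with the inclusion maps of the filtration.

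For the reversal isomorphism I would define, on elementary $p$-paths, the linear map $r$ sending $(v_0,\ldots,v_p)\mapsto (v_p,\ldots,v_0)$, together with its sign-corrected version $R_p := (-1)^{p(p+1)/2}\,r$. The heart of the argument is the boundary computation
\[
\p\, r = (-1)^p\, r\, \p \qquad \text{on } \La_p,
\]
obtained by tracking how the index of an omitted vertex $v_i$ is sent to position $p-i$ under reversal. The weights $(-1)^{p(p+1)/2}$ are precisely the factors needed to absorb the $(-1)^p$ discrepancy, so that $R_\bullet$ becomes a genuine chain map. Because $r$ carries an allowed path of $G$ to an allowed path of $G^\top$ (the edge $v_i\r v_{i+1}$ of $G$ becomes the edge $v_{i+1}\r v_i$ of $G^\top$), the map $R_\bullet$ restricts to $\mc{A}_\bullet(G)\r \mc{A}_\bullet(G^\top)$; combined with the chain-map property this shows $R_\bullet$ sends the $\p$-invariant subspaces $\Om_p(G)$ into $\Om_p(G^\top)$. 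Since reversal is an involution on basis paths and the sign weights square to $1$, the analogous map for $G^\top$ is a two-sided inverse, so $R_\bullet$ is an isomorphism of chain complexes and induces isomorphisms $H_k(G)\cong H_k(G^\top)$ in every dimension.

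Finally I would assemble the persistence statement. As the reversal map depends only on the vertex set $X$ and not on the threshold, the squares relating $R_\bullet$ at levels $\d\le \d'$ to the filtration inclusions commute on the nose; passing to homology therefore yields an isomorphism of persistent vector spaces between the $k$-dimensional PPH of $(X,A_X)$ and that of $(X,A_X^\top)$, which is exactly the claim. I expect the main obstacle to be the boundary-sign bookkeeping: one must verify the identity $\p r = (-1)^p r\p$ carefully and then check that the resulting sign-corrected map genuinely preserves the $\p$-invariant spaces $\Om_p$ rather than merely the allowed spaces $\mc{A}_p$, since it is this restriction to $\Om_\bullet$---the subtle ingredient distinguishing path homology from a naive simplicial construction---on which the whole argument hinges.
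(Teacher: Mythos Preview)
The paper, as written, does not actually contain a proof of this theorem: it is stated in the introduction, motivated experimentally in \S\ref{sec:exp}, and referred to as something the authors ``were able to conjecture and prove,'' but no argument appears in the text. So there is nothing to compare against directly.

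That said, your argument is correct and is essentially the canonical one. The identification of the filtration of $(X,A_X^\top)$ with the levelwise opposite digraph $(\G_X^\d)^\top$ is immediate, and the reversal map $r$ with sign twist $(-1)^{p(p+1)/2}$ is exactly what is needed: the computation $\p r = (-1)^p r\p$ on $\La_p$ is right (the omitted vertex in position $i$ lands in position $p-i$ after reversal), and the discrepancy $\tfrac{p(p+1)}{2}+p-\tfrac{(p-1)p}{2}=2p$ is even, so $R_\bullet$ is a chain map. Reversal manifestly swaps $\mc{A}_p(G)$ with $\mc{A}_p(G^\top)$ and preserves regularity, and your observation that the chain-map identity then forces $R_p(\Om_p(G))\subseteq \Om_p(G^\top)$ is the right way to handle the $\p$-invariant subspace. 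Naturality with respect to the filtration inclusions is tautological since $r$ acts only on the vertex sequence. One small point you could make explicit: reversal also carries irregular paths to irregular paths, so the identity $\p r=(-1)^p r\p$ genuinely descends from $\La_\bullet$ to $\mc{R}_\bullet$; you implicitly use this when passing to the regular boundary operator.
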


To draw a comparison with the existing literature, based on experimental observations, we make the following conjecture relating PPH to the Dowker persistent homology method: 

\begin{conjecture}\label{conj:pph-dowk} 
On 3-node networks, 1-dimensional PPH is isomorphic to the 1-dimensional Dowker persistent homology described in \cite{dowker-arxiv}. 
\end{conjecture}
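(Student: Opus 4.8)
The plan is to show that, for any 3-node dissimilarity network $(X,A_X)$ with $X=\set{x_0,x_1,x_2}$, the two 1-dimensional persistent vector spaces---one from PPH, one from Dowker persistent homology---are isomorphic as persistence modules, whence the equality of PPDs follows by the structure theorem for pointwise finite-dimensional persistence modules. Both filtrations change only at the finite set of critical values $\set{A_X(x_i,x_j) : i \neq j}$, so the comparison reduces to a finite combinatorial problem; I would organize it around the birth and death thresholds of the (at most one, see below) essential class in each theory, and exploit the $S_3$-relabeling symmetry together with the transposition invariance of Theorem \ref{thm:transp-inv} (and the analogous duality of the Dowker construction under $A_X \mapsto A_X^\top$, which swaps the source and sink complexes and preserves homology by Dowker's theorem) to cut down the number of essentially distinct orderings of the six weights.

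First I would pin down the Dowker side, where the picture is simple: at every threshold the Dowker complex is a subcomplex of the full $2$-simplex on $\set{x_0,x_1,x_2}$, so $H_1$ is at most one-dimensional, carried by the triangle-boundary cycle, born when the last of the three edges appears and dying when the $2$-face appears. The crucial structural observation here is a \emph{witness-collapse} lemma: if a Dowker edge $\set{x_i,x_j}$ is created at threshold $\d$ by the \emph{third} vertex $x_k$ as a common sink (rather than by $x_i$ or $x_j$), then $x_k$ is automatically a common sink for all three vertices at $\d$, so the full $2$-face appears simultaneously and no live class is created. Consequently a nontrivial Dowker bar forces each edge to be born through one of its own endpoints, i.e.\ at $\min(A_X(x_i,x_j),A_X(x_j,x_i))$, with birth equal to the maximum of these three minima and death equal to the smallest threshold at which some vertex becomes a common sink of the whole triple.

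Next I would analyze the PPH side by computing $H_1 = Z_1/B_1$ with $Z_1=\ker(\partial_1)$ the directed cycle space and $B_1 = \im(\partial_2|_{\Omega_2})$. Two lemmas drive this. First, \emph{bigons are free}: whenever $x_i \r x_j$ and $x_j \r x_i$ are both present, the regular $2$-paths $e_{iji},e_{jij}\in\Omega_2$ each have boundary $e_{ij}+e_{ji}$, so the $2$-cycle $e_{ij}+e_{ji}$ is bounded as soon as it is born, and bidirected edges never contribute to the PPD. Second, \emph{transitive fills}: a $2$-path $e_{ijk}$ is present and $\partial$-invariant exactly when $x_i\r x_j$, $x_j\r x_k$ and the shortcut $x_i\r x_k$ all appear, and its boundary, combined with bigon generators, is what bounds a triangular cycle. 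Analyzing $\Omega_2$ and $\partial_2$ as edges are added one at a time, the claim to establish is that the surviving essential class is unique (so PPH also yields at most one off-diagonal point), is a genuine directed $3$-cycle $e_{01}+e_{12}+e_{20}$ or its reverse, is born at the maximum of the three weights realizing that oriented cycle, and---this is the heart of the matter---dies at exactly the threshold where a common sink of the triple first appears.

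The main obstacle is this death-time correspondence together with the single-bar claim for PPH. On the death side one must show that the algebraic event ``the directed cycle becomes a boundary in path homology,'' which may require nontrivial linear combinations of triangular and bigon $2$-paths in $\Omega_2$, occurs at the same threshold as the combinatorial event ``a common sink of $\set{x_0,x_1,x_2}$ appears'' in the Dowker complex; the mechanism I expect to verify is that a common sink $x_m$ forces the incoming edges $x_i\r x_m$, $x_j\r x_m$, which make precisely the relevant $2$-paths and bigons $\partial$-invariant and supply the needed boundaries. One must also rule out a second independent PPH class being born after the first dies, which would create a point absent from the Dowker diagram; I expect this to follow from showing that once the triple admits a common sink, every subsequently added edge keeps $H_1$ trivial, but making this monotonicity rigorous uniformly over all weight orderings---including degenerate cases with ties, where the filtration has jumps of height zero and one must avoid spurious off-diagonal points---is the delicate step. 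Matching birth and death threshold-by-threshold then yields an isomorphism of persistence modules and the conjecture.
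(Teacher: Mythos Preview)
The paper does not prove this statement: it is stated as a \emph{conjecture}, motivated solely by the experimental observation (\S\ref{sec:exp}) that on a database of 2000 random 3-node dissimilarity networks the 1-dimensional Dowker persistence diagram and the 1-dimensional PPD agreed in every case. No argument beyond this numerical evidence is offered, so there is no ``paper's own proof'' to compare your proposal against.

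That said, your sketch goes well beyond what the paper attempts and outlines a plausible route to an actual proof. The key structural observations you isolate are correct and do most of the work: (i) on three vertices there is no room for cancellation among non-$\partial$-invariant allowed $2$-paths, so $\Omega_2$ is genuinely spanned by bigons $e_{iji}$ and ``triangles with shortcut'' $e_{ijk}$; (ii) bigon cycles are bounded the instant they appear; (iii) a ``mixed'' 1-cycle such as $e_{ij}+e_{jk}-e_{ik}$ is exactly $\partial e_{ijk}$ and hence also dies at birth, leaving only the two oriented $3$-cycles as candidates for persistence; and (iv) your witness-collapse lemma on the Dowker side is valid because $A_X(x_k,x_k)=0$ forces a third-vertex witness to witness the full $2$-simplex. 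With these in hand, if one normalizes (via relabeling and Theorem~\ref{thm:transp-inv}) so that the clockwise cycle $e_{12}+e_{23}+e_{31}$ is the earlier-born oriented $3$-cycle, then its PPH death occurs precisely when the first reverse edge enters, and each reverse edge simultaneously creates a common sink of the whole triple---so the PPH bar and the sink-Dowker bar coincide. The later-born oriented $3$-cycle is then automatically a boundary at its own birth (all six edges are present), which disposes of your ``second independent class'' worry. The residual work is exactly what you flag: the degenerate regimes where the putative birth and death coincide or where reverse edges appear before the forward cycle closes, and checking that in those regimes both diagrams are trivial. This is a finite check once the normalization is fixed, so your plan should close; but since the paper supplies nothing here, you are proving something the authors only conjectured.
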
 

As a remark related to the preceding conjecture, we observe that there are examples of 4-node networks for which 1-dimensional PPH is distinct from 1-dimensional Dowker persistent homology. One such network is provided in Figure \ref{fig:pph-dowk-4}.

In addition to testing our implementation on a database of random networks, we also tested 1-dimensional PPH on a family of \emph{cycle networks}. A cycle network on $n$ nodes, $n\in \N$, is constructed as follows: take a directed cyclic graph with edge weights equal to 1 in a clockwise direction, and endow it with the shortest path metric. A cycle network on 6 nodes is illustrated in Figure \ref{fig:cyc6}, along with its weight matrix.

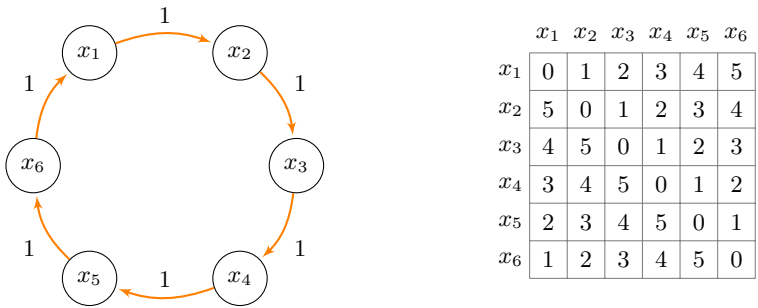
\begin{figure}[h]
\begin{center}
\begin{tikzpicture}[every node/.style={font=\footnotesize}]
\begin{scope}[draw]
\node[circle,draw](1) at (-1,1.5){$x_1$};
\node[circle,draw](2) at (1,1.5){$x_2$};
\node[circle,draw](3) at (1.75,0){$x_3$};
\node[circle,draw](4) at (1,-1.5){$x_4$};
\node[circle,draw](5) at (-1,-1.5){$x_5$};
\node[circle,draw](6) at (-1.75,0){$x_6$};
\end{scope}

\begin{scope}[draw]
\path[myarrow] (1) edge [bend left=20] node[black,above,pos=0.5]{$1$} (2);
\path[myarrow] (2) edge [bend left=20] node[black,above right,pos=0.5]{$1$} (3);
\path[myarrow] (3) edge [bend left=20] node[black,below right,pos=0.5]{$1$} (4);
\path[myarrow] (4) edge [bend left=20] node[black,above,pos=0.5]{$1$} (5);
\path[myarrow] (5) edge [bend left=20] node[black,below left,pos=0.5]{$1$} (6);
\path[myarrow] (6) edge [bend left=20] node[black,above left,pos=0.5]{$1$} (1);
\end{scope}

\begin{scope}[xshift=2.5in]
\draw[step=0.5cm,color=gray] (-1.5,-1.5) grid (1.5,1.5);

\node at (-1.25,+1.75) {$x_1$};
\node at (-0.75,+1.75) {$x_2$};
\node at (-0.25,+1.75) {$x_3$};
\node at (+0.25,+1.75) {$x_4$};
\node at (+0.75,+1.75) {$x_5$};
\node at (+1.25,+1.75) {$x_6$};

\node at (-1.75,+1.25) {$x_1$};
\node at (-1.75,+0.75) {$x_2$};
\node at (-1.75,+0.25) {$x_3$};
\node at (-1.75,-0.25) {$x_4$};
\node at (-1.75,-0.75) {$x_5$};
\node at (-1.75,-1.25) {$x_6$};

\node at (-1.25,+1.25) {$0$};
\node at (-0.75,+1.25) {$1$};
\node at (-0.25,+1.25) {$2$};
\node at (+0.25,+1.25) {$3$};
\node at (+0.75,+1.25) {$4$};
\node at (+1.25,+1.25) {$5$};

\node at (-1.25,+0.75) {$5$};
\node at (-0.75,+0.75) {$0$};
\node at (-0.25,+0.75) {$1$};
\node at (+0.25,+0.75) {$2$};
\node at (+0.75,+0.75) {$3$};
\node at (+1.25,+0.75) {$4$};

\node at (-1.25,+0.25) {$4$};
\node at (-0.75,+0.25) {$5$};
\node at (-0.25,+0.25) {$0$};
\node at (+0.25,+0.25) {$1$};
\node at (+0.75,+0.25) {$2$};
\node at (+1.25,+0.25) {$3$};

\node at (-1.25,-0.25) {$3$};
\node at (-0.75,-0.25) {$4$};
\node at (-0.25,-0.25) {$5$};
\node at (+0.25,-0.25) {$0$};
\node at (+0.75,-0.25) {$1$};
\node at (+1.25,-0.25) {$2$};

\node at (-1.25,-0.75) {$2$};
\node at (-0.75,-0.75) {$3$};
\node at (-0.25,-0.75) {$4$};
\node at (+0.25,-0.75) {$5$};
\node at (+0.75,-0.75) {$0$};
\node at (+1.25,-0.75) {$1$};

\node at (-1.25,-1.25) {$1$};
\node at (-0.75,-1.25) {$2$};
\node at (-0.25,-1.25) {$3$};
\node at (+0.25,-1.25) {$4$};
\node at (+0.75,-1.25) {$5$};
\node at (+1.25,-1.25) {$0$};
\end{scope}

\end{tikzpicture}
\end{center}
\caption{A cycle network on 6 nodes, along with its weight matrix. Note that the weights are highly asymmetric.}
\label{fig:cyc6}
\end{figure}

Cycle networks form a useful family of examples of asymmetric networks. In \cite[Theorem 22]{dowker-arxiv}, the authors provided a complete characterization of the 1-dimensional Dowker persistent homology of cycle networks on $n$ nodes, for any $n\in \N$. Our experiments suggest that such a characterization result is possible in the case of 1-dimensional PPH as well. In particular, we conjecture the following characterization result:

\begin{conjecture}\label{conj:pph-cycle} 
On cycle networks having any number of nodes, 1-dimensional PPH is isomorphic to 1-dimensional Dowker persistent homology. More specifically, the 1-dimensional PPD of a cycle network on $n$ nodes, $n\in \N$, contains exactly one off-diagonal point $(1,\ceil{n/2})$. 
\end{conjecture}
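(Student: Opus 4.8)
The plan is to compute the one–dimensional persistent path homology of the digraph filtration $\{G_\delta\}_{\delta\ge 0}$ directly and read off its barcode. Writing the vertex set as $\{x_0,\dots,x_{n-1}\}$ with indices in $\Z/n$, the shortest–path weight is $(j-i)\bmod n$, so $G_\delta$ is the circulant digraph with an edge $x_i\to x_{i+\ell}$ (a ``chord of length $\ell$'') precisely when $1\le \ell\le\delta$. I would first record the birth: for $0\le\delta<1$ there are no edges and $H_1=0$, while $G_1$ is the directed $n$–cycle, for which $\ker\partial_1$ is spanned by $\z_1:=\sum_{i}x_ix_{i+1}$ and $\Omega_2(G_1)=0$ (the only allowed $2$–path $x_ix_{i+1}x_{i+2}$ is not $\partial$–invariant, since its diagonal $x_ix_{i+2}$ is absent and no cancellation is available). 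Hence $H_1(G_1)=\K\langle[\z_1]\rangle$ and the class is born at $\delta=1$. It then remains to show that $[\z_1]$ generates $H_1(G_\delta)$ for every $\delta\ge1$, that it is nonzero exactly for $\delta<\ceil{n/2}$, and to assemble these facts into the single interval $[1,\ceil{n/2})$.

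For the upper bound $\dim H_1(G_\delta)\le 1$ (valid for all $\delta\ge1$ and over any field) I would show that every $1$–cycle is homologous to one supported on length–$1$ edges. The triangle $x_ix_{i+1}x_{i+\ell}$ lies in $\Omega_2(G_\delta)$ whenever $\ell\le\delta$ (its diagonal $x_ix_{i+\ell}$ is an edge), and telescoping its boundaries rewrites any chord $x_ix_{i+\ell}$, modulo $\im\partial_2$, as the sum $\sum_{j=0}^{\ell-1}x_{i+j}x_{i+j+1}$ of length–$1$ edges. Subtracting these boundaries from a given cycle $z$ produces a homologous cycle $z'$ supported on length–$1$ edges; since $\partial z'=0$ forces all its coefficients to coincide, $z'=c\,\z_1$ and $[z]=c[\z_1]$. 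For the death I would exhibit an explicit filling of $\z_1$ at $\delta=\ceil{n/2}$: splitting the cycle at $x_0$ and $x_m$ with $m=\ceil{n/2}$ into the arcs $x_0\to\cdots\to x_m$ (length $m$) and $x_m\to\cdots\to x_0$ (length $n-m\le m$), the ``fan'' $2$–chains $\sum_{j=1}^{m-1}x_0x_jx_{j+1}$ and $\sum_{j=1}^{n-m-1}x_mx_{m+j}x_{m+j+1}$ fill each arc down to the chords $x_0x_m$ and $x_mx_0$, and the bigon $x_0x_mx_0$ (both chords have length $\le m=\delta$) cancels these two chords. Every chord used has length $\le \ceil{n/2}$, so this chain lives in $\Omega_2(G_{\ceil{n/2}})$ and has boundary $\z_1$; thus $[\z_1]=0$ for every $\delta\ge\ceil{n/2}$, over any field.

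What remains is the survival of $[\z_1]$ for $1\le\delta<\ceil{n/2}$, and this I would obtain from an explicit cocycle. Define $\phi$ on $1$–chains by letting $\phi(x_ix_j)$ be the chord length $(j-i)\bmod n\in\{1,\dots,n-1\}$. For any allowed $2$–path $x_ix_{i+a}x_{i+a+b}$ of $G_\delta$ we have $a,b\le\delta$, and when $\delta<\ceil{n/2}$ we get $a+b\le 2\delta<n$, so no ``wraparound'' occurs and $\phi(\partial(x_ix_{i+a}x_{i+a+b}))=b-(a+b)+a=0$; crucially, in this range there are no bigons, since $x_ix_{i+k}x_i$ would require both $k\le\delta$ and $n-k\le\delta$. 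Hence $\phi\circ\partial=0$ on every $2$–path and therefore on all of $\Omega_2(G_\delta)$, while $\phi(\z_1)=n$. Over a field of characteristic zero (or not dividing $n$) this shows $[\z_1]\ne0$, so $\dim H_1(G_\delta)=1$ for $1\le\delta<\ceil{n/2}$ by the upper bound. Combining the three computations, the $1$–dimensional PPH of the cycle network is the interval module on $[1,\ceil{n/2})$, i.e.\ the PPD consists of the single off–diagonal point $(1,\ceil{n/2})$; since the Dowker PPD of the cycle network is the same single point \cite[Theorem 22]{dowker-arxiv}, the two persistence modules are isomorphic.

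The main obstacle is the characteristic: the length cocycle satisfies $\phi(\z_1)=n$, which vanishes precisely when $\operatorname{char}\K$ divides $n$ (e.g.\ $\K=\Z_2$ with $n$ even, the standard persistence coefficients). In that case survival cannot be read off from $\phi$ and one must instead analyze $\Omega_2(G_\delta)$ directly. Here I would describe $\Omega_2$ explicitly—it is spanned by the $\partial$–invariant triangles together with the ``polygonal'' cancellation classes built to annihilate each absent diagonal—and compute $\rank\partial_2$ so as to verify $\dim H_1(G_\delta)=\dim\ker\partial_1-\rank\partial_2=\big(n\delta-(n-1)\big)-\rank\partial_2=1$. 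The cyclic symmetry $x_i\mapsto x_{i+1}$ makes every boundary matrix circulant, so I expect the cleanest route to be a $\Z/n$–equivariant (discrete Fourier) decomposition of the $\Omega$–complex, reducing the rank computation to one small linear system per character and isolating $[\z_1]$ in the trivial isotypic component. Carrying this out uniformly in $n$, $\delta$ and $\operatorname{char}\K$ is the crux of the argument, and the reason the statement is presently only a conjecture.
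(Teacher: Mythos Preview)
The paper does not prove this statement at all: it is listed as Conjecture~\ref{conj:pph-cycle}, supported only by experiments on cycle networks with $3$--$20$ nodes over $\K=\R$. There is therefore nothing in the paper to compare your argument against; your proposal is not a reconstruction of the authors' proof but an attempt to \emph{settle} their conjecture.

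With that understood, your argument is substantially more than the paper offers, and the pieces you give are correct. The birth computation ($\Omega_2(G_1)=0$ for $n\ge 3$, hence $H_1(G_1)=\K\langle[\z_1]\rangle$), the upper bound $\dim H_1(G_\delta)\le 1$ via the $\partial$-invariant triangles $x_ix_{i+1}x_{i+\ell}$ (valid over any field), and the explicit filling at $\delta=\ceil{n/2}$ using the two fans plus the bigon $x_0x_mx_0$ (also field-independent) all go through as written. The length cocycle $\phi$ is likewise sound: for $\delta<\ceil{n/2}$ one has $a+b\le 2\delta<n$, so no wraparound and no bigons occur, $\phi\circ\partial_2=0$ on all of $\mc{A}_2(G_\delta)\supseteq\Omega_2(G_\delta)$, and $\phi(\z_1)=n$. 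This already proves the conjecture whenever $\operatorname{char}\K\nmid n$, in particular over $\K=\R$, which is the only field the paper actually computes with.

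You are also right that the sole gap is the survival of $[\z_1]$ for $2\le\delta<\ceil{n/2}$ when $\operatorname{char}\K\mid n$ (note that $\delta=1$ needs no cocycle, since $\Omega_2(G_1)=0$). Your outline for closing it---exploiting the $\Z/n$-circulant structure to block-diagonalize $\partial_2$---is reasonable, though over fields where $n=0$ the usual Fourier/character decomposition is unavailable, so the equivariant route would have to be replaced by a direct combinatorial rank count on the explicit $\Omega_2$ generators (the ``solid'' triangles $x_ix_{i+1}x_{i+\ell}$ with $\ell\le\delta$ together with the diagonal-cancelling differences for $\ell>\delta$). That residual case is exactly why the authors left the statement as a conjecture; your proposal both isolates it precisely and disposes of everything else.
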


Beyond testing our method on simulated datasets, we also applied 1-dimensional PPH to a real-world dataset of directed networks arising from 19 years of U.S. economic data (1997-2015). We were interested in seeing if our method was sensitive to significant changes in the economy, such as that caused by the 2007-2008 financial crisis. This is indeed the case, as we show in Figure \ref{fig:econ}. 

A package including our software and datasets will eventually be released on \url{https://research.math.osu.edu/networks/}.

\subsection{Organization of the paper} There are three topics that form the preliminaries of this paper. \S\ref{sec:back-ph} contains the necessary background on persistent homology, \S\ref{sec:back-nets} contains the background on networks and the network distance $\dn$, and \S\ref{sec:back-path} contains the background on digraphs and path homology. In \S\ref{sec:pph} we combine ingredients from the preceding sections to define PPH and prove its stability. In \S\ref{sec:exp} we describe our experiments and explain how they relate to our conjectures.

\subsection{Notation}
We denote the nonempty elements of the power set of a set $X$ by $\pow(X)$, and use the convention that the empty set is excluded from $\pow(X)$. We write $\Z_+, \R_+$ to denote the nonnegative integers and reals, respectively. We will write $\overline{\R}$ to denote the extended real numbers $[-\infty,\infty]$. We fix a field $\mathbb{K}$ and use it throughout the paper. The identity map on a set $X$ is denoted $\id_X$. Given vector spaces $V,V'$, we write $V\cong V'$ to denote isomorphism of vector spaces. Given a finite set $S$, we write $\mb{K}[S]$ to denote the free vector space over $\mb{K}$ generated by the elements of $S$. When we have a sequence of maps $(f_i)_{i\in I}$ indexed by a set $I$, we will often refer to them collectively as $f_\bullet$, without specifying an index. Given sets $A, B$, a map $f:A \r B$, and subsets $S_A \subseteq A, S_B \subseteq B$, we will write $f(S_A) \subseteq S_B$ to mean that $f(s) \in S_B$ for each $s\in S_A$.

\section{Background on Persistent Homology}
\label{sec:back-ph}

Homology is the formal algebraic construction at the center of our work. For our purposes, we define homology in the setting of general vector spaces, and refer the reader to \cite[\S1.13]{munkres-book} for additional details. A \emph{chain complex} is defined to be a sequence of vector
spaces $(C_k)_{k\in \Z}$ and \emph{boundary maps} $(\p_k:C_k\r C_{k-1})_{k\in \Z}$ satisfying the condition $\p_{k-1}\circ \p_k =0$ for each $k \in \Z$. We often denote a chain complex as $\mc{C}=(C_k,\p_k)_{k\in \Z}$. Because our constructions are finite, often there will exist $m, M\in \Z$ such that $C_k \cong \set{0}$ for each integer $k<m$, and $C_k \cong C_j$ for all integers $j,k > M$. In the remainder of this paper, we will use the nonnegative integers $\Z_+$ to index a chain complex, and define $C_{-1}, C_{-2}$ as needed.

Given a chain complex $\mc{C}$ and any $k\in \Z_+$, one may define the following subspaces: 
\begin{align*}
Z_k(\mc{C}) &:= \ker(\p_k) = \set{c\in C_k : \p_kc = 0}, \text{ the \emph{$k$-cycles}},\\ 
B_k(\mc{C}) &:=\im(\p_{k+1}) = \set{c \in C_{k} : c = \p_{k+1} b \text{ for some } b\in C_{k+1}}, \text{the \emph{$k$-boundaries}}.
\end{align*}

The quotient vector space $H_k(\mc{C}) :=Z_k(\mc{C})/B_k(\mc{C}) $ is called the \emph{$k$-th homology vector space of the chain complex $\mc{C}$}. The dimension of $H_k(\mc{C})$ is called the \emph{$k$-th Betti number} of $\mc{C}$, denoted $\b_k(\mc{C})$.

Given two chain complexes $\mc{C}=(C_k,\p_k)_{k\in \Z}$ and $\mc{C'}=(C'_k,\p'_k)_{k\in \Z}$, a \emph{chain map} $\ph: \mc{C} \r \mc{C'}$ is a family of morphisms $(\ph_k : C_k \r C'_k)_{k\in \Z_+}$ such that $\p'_k \circ \ph_k = \ph_{k-1} \circ \p_k$ for each $k\in \Z_+$. One can verify that such a chain map induces a family of linear maps $(\ph_\#)_k: H_k(\mc{C}) \r H_k(\mc{C}')$ for each $k\in \Z_+$ \cite[p. 72]{munkres-book}.

A \emph{persistent vector space} \cite[Definition 3.3]{carlsson-acta} is defined to be a family of vector spaces $\{V^{\d} \xr{\nu_{\d,\d'}} V^{\d'}\}_{\d \leq \d'\in \R}$ with linear maps between them, such that: (1) $\dim(V^{\d}) < \infty$ for each $\d \in \R$, (2) there exist $\d_I, \d_F \in \R$ such that all maps $\nu_{\d,\d'}$ are linear isomorphisms for $\d,\d' \geq \d_F$ and for $\d,\d' \leq \d_I$, (3) there are only finitely many values of $\d \in \R$ such that $V^{\d-\e} \not\cong V^{\d}$ for each $\e>0$, and (4) for any $\d < \d' < \d'' \in \R$, we have $\nu_{\d',\d''}\circ \nu_{\d,\d'} = \nu_{\d,\d''}$.  

Even though the preceding definition uses $\R$ as the indexing set, note that there are only finitely many values of $\d \in \R$ for which the linear maps are not isomorphisms. As a consequence, one may equivalently define a persistent vector space to be an $\N$-indexed family $\{V^{\d_i} \xr{\nu_{i,i+1}} V^{\d_{i+1}})_{i\in \N}$ such that: (1) $\dim(V^{\d_i})< \infty$ for each $i\in \N$, and (2) there exists $F\in \N$ such that all the $\nu_{i,i+1}$ are isomorphisms for $i\geq F$. An explicit equivalence between these two notions is provided in \cite[\S 2.1]{dowker-arxiv}.

\subsection{Persistence diagrams and barcodes} To each persistent vector space, one may associate a multiset of intervals, called a \emph{persistence barcode} or \emph{persistence diagram}. This barcode is a full invariant of a persistent vector space \cite{zomorodian2005computing}, and it has the following natural interpretation: given a barcode corresponding to a persistent vector space obtained from a filtered simplicial complex, the long bars correspond to meaningful topological features, whereas the short bars correspond to noise or artifacts in the data. The standard treatment of persistence barcodes and diagrams appears in \cite{frosini1992measuring,frosini1990distance,robins1999towards, edelsbrunner2002topological}, and \cite{zomorodian2005computing}. We follow a more modern presentation that appeared in \cite{ph-self}. To build intuition, we refer the reader to Figure \ref{fig:pers}.

\begin{figure}[b]
\includegraphics[scale=0.7]{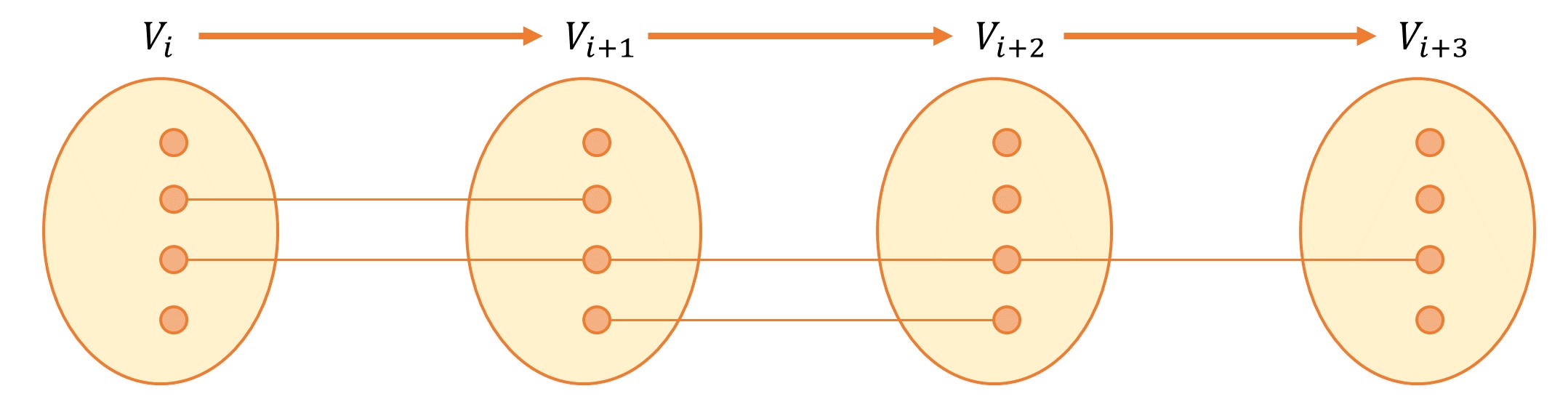}
\caption{Intuition behind a persistence barcode. Let $i\in \N$, and consider a sequence of vector spaces $V_i, V_{i+1}, V_{i+2},V_{i+3}$ as above, with linear maps $\{\nu_{i,i+1},\nu_{i+1,i+2},\nu_{i+2,i+3}\}$. The dark dots represent basis elements, where the bases are chosen such that $\nu_{i,i+1}$ maps the basis elements of $V_i$ to those of $V_{i+1}$, and so on. Such a choice of basis is possible by performing row and column operations on the matrices of the linear maps \cite[Basis Lemma]{ph-self}. The persistence barcode of this sequence can then be read off from the ``strings" joining the dots. In this case, the barcode is the collection $\{[i,i+1],[i,i+3],[i+1,i+2]\}$. Note that when these intervals are read in $\Z$, they are the same as the half-open intervals one would expect from the definition of the persistence barcode given above.}
\label{fig:pers}
\end{figure}

Let $\V = \{V^{\d_i} \xr{\nu_{i,i+1}} V^{\d_{i+1}})_{i\in \N}$ be a persistent vector space. Because all but finitely many of the $\nu_{\bullet,\bullet+1}$ maps are isomorphisms, one may choose \emph{compatible bases} $(B_i)_{i\in \N}$ for each $V^{\d_i}$, $i\in \N$, such that $\nu_{i,i+1}|_{B_i}$ is injective for each $i\in \N$, and  
\[\rank (\nu_{i,i+1}) = \card(\im(\nu_{i,i+1}|_{B_i})\cap B_{i+1}), \text{ for each } i\in \N \text{ \cite[Basis Lemma]{ph-self}}.\] 
Here $\nu_{i,i+1}|_{B_i}$ denotes the restriction of $\nu_{i,i+1}$ to the set $B_i$. Fix such a collection $(B_i)_{i\in \N}$ of bases. Next define:
\[L:=\{(b,i) : b \in B_i, b \not\in \im(\nu_{i-1,i}), i\in \{2,3,4,\ldots\}\} \cup \{(b,1) : b\in B_1\}.\]
For each $(b,i)\in L$, the integer $i$ is interpreted as the \emph{birth index} of the basis element $b$. Informally, it refers to the first index at which a certain algebraic signal appears in the persistent vector space. Next define a map $\ell:L \r \N$ as follows:
\[\ell(b,i):=\max\{k\in \N : (\nu_{k-1,k}\circ \cdots \circ \nu_{i+1,i+2}\circ \nu_{i,i+1})(b) \in B_{k}\}.\]
For each $(b,i)\in L$, the integer $\ell(b,i)$ is often called a \emph{death index}. Informally, it refers to the index at which the signal $b$ disappears from the persistent vector space. 
The \emph{persistence barcode of $\V$} is then defined to be the following multiset of intervals: 
\[\pers(\V):=\big[[\d_i,\d_{j+1}) : \text{there exists } (b,i)\in L \text{ such that } \ell(b,i)=j \big],\]
where the bracket notation denotes taking the multiset and the multiplicity of $[\d_i,\d_{j+1})$ is the number of elements $(b,i)\in L$ such that $\ell(b,i)=j$.

These intervals, which are called \emph{persistence intervals}, are then represented as a set of lines over a single axis. Equivalently, the intervals in $\pers(\V)$ can be visualized as a multiset of points lying on or above the diagonal in $\overline{\R}^2$, counted with multiplicity. This is the case for the \emph{persistence diagram of $\V$}, which is defined as follows:
\[\dgm(\V):=\big[(\d_i,\d_{j+1}) \in \overline{\R}^2 : [\d_i,\d_{j+1}) \in \pers(\V) \big],\]
where the multiplicity of $(\d_i,\d_{j+1})\in \overline{\R}^2$ is given by the multiplicity of $[\d_i,\d_{j+1}) \in \pers(\V)$.

The \emph{bottleneck distance} between persistence diagrams, and more generally between multisets $A,B$ of points in $\overline{\R}^2$, is defined as follows:
\[\db(A,B):= \inf\set{\sup_{a\in A}\norm{a - \ph(a)}_\infty : \ph:A\cup \Delta^\infty \r B\cup \Delta^\infty \text{ a bijection}}.\]
Here $\norm{(p,q)-(p',q')}_\infty:=\max(|p-p'|,|q-q'|)$ for each $p,q,p',q'\in \R$, and $\Delta^\infty$ is the multiset consisting of each point on the diagonal, taken with infinite multiplicity.

\begin{remark}\label{rem:trivial-diag} From the definition of bottleneck distance, it follows that points in a persistence diagram $\dgm(\mc{V})$ that belong to the diagonal do not contribute to the bottleneck distance between $\dgm(\mc{V})$ and another diagram $\dgm(\mc{U})$. Thus whenever we describe a persistence diagram as being \emph{trivial}, we mean that either it is empty, or it does not have any off-diagonal points. 
\end{remark}

There are numerous ways of formulating the definitions we have provided in this section. For more details, we refer the reader to \cite{robins1999towards, edelsbrunner2002topological, zomorodian2005computing, zigzag, edelsbrunner2010computational, bauer-isom, ph-self}.

\subsection{Interleaving distance and stability of persistent vector spaces.}
\label{sec:background-int}

Given $\e \geq 0$, two $\R$-indexed persistent vector spaces $\mc{V}=\{V^\d\xr{\nu_{\d,\d'}} V^{\d'}\}_{\d\leq \d'}$ and $\mc{U}=\{U^\d\xr{\mu_{\d,\d'}} U^{\d'}\}_{\d\leq \d'}$ are said to be \emph{$\e$-interleaved} \cite{chazal2009proximity,bauer-isom} if there exist two families of linear maps
\begin{align*}
\{\ph_{\d}&:V^\d \r V^{\d + \e}\}_{\d \in \R},\\ 
\{\psi_{\d}&:U^\d \r U^{\d + \e}\}_{\d \in \R}
\end{align*}
such that the following diagrams commute for all $\d' \geq \d\in \R$:

\[ \begin{tikzcd}[column sep=large]
V^\d \arrow{r}{\nu_{\d,\d'}} \arrow[swap]{dr}{\ph_\d} & 
V^{\d'}\arrow{dr}{\ph_{\d'}} & 
{} & 
{} & 
V^{\d+\e} \arrow{r}{\nu_{\d+\e,\d'+\e}} &
V^{\d'+\e} \\
{} &
U^{\d+\e} \arrow{r}{\mu_{\d+\e,\d'+\e}} & 
U^{\d'+\e} &
U^{\d} \arrow{ur}{\psi_\d} \arrow{r}{\mu_{\d,\d'}} &
U^{\d'} \arrow[swap]{ur}{\psi_{\d'}} & {}
\end{tikzcd} \]

\[ \begin{tikzcd}[column sep=large]
V^\d \arrow{rr}{\nu_{\d,\d+2\e}} \arrow[swap]{dr}{\ph_\d} & {} &
V^{\d+2\e} & 
{}&
V^{\d+\e} \arrow{dr}{\psi_{\d+\e}}\\
{} &
U^{\d+\e} \arrow[swap]{ur}{\ph_{\d+\e}} & 
{}&
U^{\d} \arrow{ur}{\psi_{\d}} \arrow{rr}{\mu_{\d,\d+2\e}} & {} &
U^{\d+2\e} 
\end{tikzcd} \]

The purpose of introducing $\e$-interleavings is to define a pseudometric on the collection of persistent vector spaces. The \emph{interleaving distance} between two $\R$-indexed persistent vector spaces $\mc{V},\U$ is given by:
\[\di(\U,\V) := \inf \set{\e \geq 0 : \text{$\U$ and $\V$ are $\e$-interleaved}}.\]
One can verify that this definition induces a pseudometric on the collection of persistent vector spaces \cite{chazal2009proximity, bauer-isom}. The interleaving distance can then be related to the bottleneck distance as follows:
\begin{theorem}[Algebraic Stability Theorem, \cite{chazal2009proximity}]\label{thm:alg-stab}
Let $\U, \V$ be two $\R$-indexed persistent vector spaces. Then,
\[\db(\dgm(\U),\dgm(\V))\leq \di(\U,\V).\]
\end{theorem}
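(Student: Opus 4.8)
Since the diagrams in question are finite (each persistent vector space has only finitely many jump values, so each of $\dgm(\U),\dgm(\V)$ has finitely many off-diagonal points), the plan is to reduce the inequality to a purely combinatorial matching problem on barcodes and then solve that problem using rank data extracted from the interleaving. The structural input is the decomposition of a persistent vector space into interval summands indexed by its persistence intervals (which is exactly what the compatible bases of the Basis Lemma provide); this lets me record the whole invariant through the rank function $r_{\V}(b,d):=\rank(\nu_{b,d})$, and similarly $r_{\U}(b,d):=\rank(\mu_{b,d})$. The whole argument then hinges on showing these two rank functions are close, and on converting closeness of rank functions into a bottleneck bound.

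First I would extract the key inequalities from the $\e$-interleaving. Using the triangle-commutativity conditions together with the naturality of $\psi_\bullet$ (both encoded in the commuting diagrams of \S\ref{sec:background-int}), one checks that whenever $d-b\ge 2\e$ the structure map factors as
\[
\nu_{b,d} \;=\; \psi_{d-\e}\circ \mu_{b+\e,\,d-\e}\circ \ph_{b},
\]
so that $\rank(\nu_{b,d})\le \rank(\mu_{b+\e,d-\e})$, and symmetrically with the roles of $\U,\V$ exchanged. This yields the pair of rank inequalities
\[
r_{\V}(b,d)\le r_{\U}(b+\e,\,d-\e), \qquad r_{\U}(b,d)\le r_{\V}(b+\e,\,d-\e),
\]
valid for all $b\le d$ with $d-b\ge 2\e$. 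These are the only place the interleaving is used; everything afterwards is combinatorics.

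Next I would translate ranks into diagram multiplicities. For a persistent vector space, $r_{\V}(b,d)$ counts precisely the persistence intervals born at or before $b$ and still alive past $d$, i.e. the points of $\dgm(\V)$ in the upper-left region $\{\alpha\le b,\ \beta>d\}$; by inclusion–exclusion over the corners of a rectangle, the number of points of $\dgm(\V)$ inside any box $R=[b_0,b_1]\times[d_0,d_1]$ lying strictly above the diagonal is recovered from $r_{\V}$ evaluated at the four corners. Feeding the rank inequalities into this formula gives a ``box lemma'': for every such $R$ whose sides are separated from the diagonal by more than $2\e$, the number of points of $\dgm(\V)$ in $R$ is bounded by the number of points of $\dgm(\U)$ in the $\e$-thickening of $R$, and vice versa. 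Finally I would build the required matching: off-diagonal points of either diagram with persistence at most $2\e$ lie within $\ell^\infty$-distance $\e$ of $\Delta^\infty$ and are matched to the diagonal for free, while the box inequalities verify Hall's condition for the bipartite graph joining the remaining (long) points of $\dgm(\U)$ and $\dgm(\V)$ that sit within $\ell^\infty$-distance $\e$ of one another. An application of Hall's marriage theorem then produces a bijection realizing $\db(\dgm(\U),\dgm(\V))\le \e$, and taking the infimum over all $\e$ for which $\U,\V$ are $\e$-interleaved gives $\db(\dgm(\U),\dgm(\V))\le \di(\U,\V)$.

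I expect the main obstacle to be the final step, namely passing from the box/rank inequalities to an honest matching. The inequalities only control counts inside rectangles, so verifying that Hall's condition holds simultaneously for \emph{every} relevant subset of points — while correctly bookkeeping multiplicities and the interaction between the long bars and the diagonal — is the delicate combinatorial core of the proof. By contrast, the algebraic factorization and the rank-to-multiplicity translation are essentially routine once the interleaving conditions are unwound.
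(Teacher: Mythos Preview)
The paper does not give its own proof of this theorem: it is stated with attribution to \cite{chazal2009proximity} and then invoked as a black box in the stability proof of persistent path homology. There is therefore nothing in the paper to compare your proposal against.

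For what it is worth, your sketch follows the standard route from the literature (factor the structure maps through the interleaving to get rank inequalities, convert ranks to box counts via inclusion--exclusion, then build a matching via Hall's theorem), and it correctly identifies the Hall-condition verification as the delicate step. If you want to flesh it out, the original arguments in Chazal--Cohen-Steiner--Glisse--Guibas--Oudot (2009) and the earlier Cohen-Steiner--Edelsbrunner--Harer stability paper are the right references; but for the purposes of this paper a citation suffices.
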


\section{Background on Directed Networks and Network Distances}
\label{sec:back-nets}

We follow the framework of \cite{carlsson2013axiomatic,clust-net}. Recall that a \emph{(dissimilarity) network} is a finite set $X$ together with a \emph{weight function} $A_X:X\times X \r \R_+$ such that for any $x,y\in X$: (1) $A_X(x,y)\geq 0$, and (2) $A_X(x,y)=0 \iff x=y$. Note that $A_X$ is not required to satisfy the triangle inequality or any symmetry condition. The collection of all such networks is denoted $\Ncal$.

When comparing networks of the same size, e.g. two networks $(X,A_X),(X,A'_X)$, a natural method is to consider the $\ell^\infty$ distance:
\[\norm{A_X-A'_X}_\infty:=\max_{x,x'\in X}|A_X(x,x')-A'_X(x,x')|.\]

But one would naturally want a generalization of the $\ell^\infty$ distance that works for networks having different sizes. In this case, one needs a way to correlate points in one network with points in the other. To see how this can be done, let $(X,A_X), (Y,A_Y) \in \Ncal$. Let $R$ be any nonempty relation between $X$ and $Y$, i.e. a nonempty subset of $X \times Y$. The \emph{distortion} of the relation $R$ is given by:
\[\dis(R):=\max_{(x,y),(x',y')\in R}|A_X(x,x')-A_Y(y,y')|.\] 

A \emph{correspondence between $X$ and $Y$} is a relation $R$ between $X$ and $Y$ such that $\pi_X(R)=X$ and $\pi_Y(R)=Y$, where $\pi_X:X\times Y \r X$ and $\pi_Y:X\times Y \r Y$ denote the natural projections. The collection of all correspondences between $X$ and $Y$ will be denoted $\Rsc(X,Y)$.

Following prior work in \cite{clust-net}, the \emph{network distance} $\dn:\Ncal \times \Ncal \r \R_+$ is then defined as:
\[\dn(X,Y):=\tfrac{1}{2}\min_{R\in\Rsc}\dis(R).\]

It can be verified that $\dn$ as defined above is a pseudometric, and that the networks at 0-distance can be completely characterized \cite{nets-allerton,nets-icassp}. Next we wish to state a reformulation of $\dn$ that will aid our proofs. First we define the distortion of a map between two networks. Given any $(X,A_X),(Y,A_Y)\in \Ncal$ and a map $\ph:(X,A_X) \r (Y,A_Y)$, the \emph{distortion} of $\ph$ is defined as:
\[\dis(\ph):= \max_{x,x'\in X}|A_X(x,x')-A_Y(\ph(x),\ph(x'))|.\]
Next, given maps $\ph:(X,A_X)\r (Y,A_Y)$ and $\psi:(Y,A_Y)\r (X,A_X)$, we define two \emph{co-distortion} terms: 
\begin{align*}
C_{X,Y}(\ph,\psi) &:= \max_{(x,y)\in X\times Y}|A_X(x,\psi(y)) - A_Y(\ph(x),y)|,\\
C_{Y,X}(\psi,\ph) &:= \max_{(y,x)\in Y\times X}|A_Y(y,\ph(x)) - A_X(\psi(y),x)|.
\end{align*}

\begin{proposition}[{\cite[Proposition 4]{dowker-arxiv}}]
\label{prop:dn-ko}
Let $(X,A_X), (Y,A_Y)\in \Ncal$. Then,
\[ \dn(X,Y) = \tfrac{1}{2}\min\{\max(\dis(\ph),\dis(\psi),C_{X,Y}(\ph,\psi),C_{Y,X}(\psi,\ph)) : \ph:X \r Y, \psi:Y \r X \text{ any maps}\}.\] 
\end{proposition}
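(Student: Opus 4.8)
The plan is to establish the two inequalities $\dn(X,Y) \le \tfrac12 \min(\cdots)$ and $\dn(X,Y) \ge \tfrac12 \min(\cdots)$ separately, translating back and forth between correspondences and pairs of maps. Since $X$ and $Y$ are finite, there are only finitely many relations between them and finitely many maps in each direction, so every minimum in sight is attained; it therefore suffices to reason about fixed minimizers and arbitrary competitors.

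For the inequality $\dn(X,Y) \le \tfrac12 \min(\cdots)$, I would start from an arbitrary pair of maps $\ph : X \to Y$ and $\psi : Y \to X$ and form the relation
\[
R := \{(x, \ph(x)) : x \in X\} \cup \{(\psi(y), y) : y \in Y\}.
\]
This is a correspondence, since every $x \in X$ occurs as a first coordinate and every $y \in Y$ as a second coordinate, giving $\pi_X(R) = X$ and $\pi_Y(R) = Y$. The key computation is to bound $\dis(R)$ by splitting the pairs $\big((a,b),(a',b')\big) \in R \times R$ into four types according to which of the two graphs each element is drawn from: two elements from the graph of $\ph$ contribute at most $\dis(\ph)$; two from the graph of $\psi$ contribute at most $\dis(\psi)$; and the two mixed configurations contribute at most $C_{X,Y}(\ph,\psi)$ and $C_{Y,X}(\psi,\ph)$ respectively. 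Hence $\dis(R) \le \max(\dis(\ph),\dis(\psi),C_{X,Y}(\ph,\psi),C_{Y,X}(\psi,\ph))$, so $\min_{R'\in\Rsc}\dis(R') \le \max(\cdots)$ for each $\ph,\psi$, and taking the minimum over $\ph,\psi$ and multiplying by $\tfrac12$ yields the claim.

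For the reverse inequality, I would fix a correspondence $R$ achieving $\dn(X,Y) = \tfrac12\dis(R)$ and extract maps from it. Since $\pi_X(R) = X$, for each $x$ I can choose some $y$ with $(x,y) \in R$ and set $\ph(x) := y$; symmetrically, using $\pi_Y(R) = Y$, I define $\psi$. By construction the graphs of both $\ph$ and $\psi$ lie inside $R$, so each of $\dis(\ph)$, $\dis(\psi)$, $C_{X,Y}(\ph,\psi)$, $C_{Y,X}(\psi,\ph)$ is a maximum of quantities $|A_X(a,a')-A_Y(b,b')|$ over pairs $(a,b),(a',b')$ belonging to $R$, and is therefore bounded above by $\dis(R)$. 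This gives $\max(\cdots) \le \dis(R)$, hence $\min_{\ph,\psi}\max(\cdots) \le \dis(R)$, and multiplying by $\tfrac12$ shows $\tfrac12 \min(\cdots) \le \dn(X,Y)$. Combining the two inequalities finishes the proof.

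The argument is essentially bookkeeping, and I do not expect a serious obstacle; the one point that genuinely uses the asymmetric setting is the treatment of the two mixed cases in the distortion bound. In the symmetric (metric) situation these collapse into a single term, but here the comparison of $A_X(x,\psi(y))$ against $A_Y(\ph(x),y)$ and that of $A_X(\psi(y),x)$ against $A_Y(y,\ph(x))$ are genuinely distinct, which is exactly why two separate co-distortion terms $C_{X,Y}$ and $C_{Y,X}$ must appear. Keeping the roles of the coordinates straight in these mixed cases is the only place where care is needed.
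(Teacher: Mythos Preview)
The paper does not actually prove this proposition; it is quoted verbatim from \cite[Proposition 4]{dowker-arxiv} and only followed by a remark comparing it to the Kalton--Ostrovskii formulation for metric spaces. So there is no in-paper proof to compare against.

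That said, your argument is correct and is the standard one. Both directions go through exactly as you describe: the union-of-graphs correspondence $R=\{(x,\ph(x))\}\cup\{(\psi(y),y)\}$ gives the $\le$ inequality once you split pairs in $R\times R$ into the four types, and selecting $\ph,\psi$ out of a minimizing correspondence gives the $\ge$ inequality. Your observation that the two mixed cases are genuinely distinct in the asymmetric setting is precisely the point the paper's subsequent remark emphasizes. Nothing is missing.
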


\begin{remark} Proposition \ref{prop:dn-ko} is analogous to a result of Kalton and Ostrovskii \cite[Theorem 2.1]{kalton1997distances} which involves the Gromov-Hausdorff distance between metric spaces. In particular, when restricted to the special case of networks that are also metric spaces, the network distance $\dn$ agrees with the Gromov-Hausdorff distance. Details on the Gromov-Hausdorff distance can be found in \cite{burago}. 

An important remark is that in the Kalton-Ostrovskii formulation, there is only one co-distortion term. When Proposition \ref{prop:dn-ko} is applied to metric spaces, the two co-distortion terms become equal by symmetry, and thus the Kalton-Ostrovskii formulation is recovered. But \emph{a priori}, the lack of symmetry in the network setting requires us to consider both of the co-distortion terms.
\end{remark}

\section{Background on Digraphs and Path Homology}
\label{sec:back-path}

In what follows, we summarize and condense some concepts that appeared in \cite{grigor-arxiv}, and attempt to preserve the original notation wherever possible. 

\begin{definition}\label{defn:aeq}
Before proceeding, recall that a \emph{digraph} is a pair $G=(X,E)$, where $X$ is a finite set (the \emph{vertices}) and $E$ is a subset of $X\times X$ (the \emph{edges}). We will always consider digraphs without self-loops. We also make the following remark on notation: given $x,x' \in X$ for a digraph $G=(X,E)$, we will write $x \aeq x'$ to mean:
\begin{align*}
\text{either } x=x',\text{ or } (x,x')\in E.
\end{align*} 
\end{definition}

\subsection{Vector spaces of paths}

Given a finite set $X$ and any integer $p \in Z_+$, an \emph{elementary $p$-path over $X$} is a sequence $[x_0,\ldots, x_p]$ of $p+1$ elements of $X$. For each $p\in \Z_+$, the free vector space consisting of all formal linear combinations of elementary $p$-paths over $X$ with coefficients in $\mathbb{K}$ is denoted $\La_p= \La_p(X)=\La_p(X,\mathbb{K})$. One also defines $\La_{-1}:=\mb{K}$ and $\La_{-2}:=\set{0}$. Next, for any $p \in \Z_+ $, one defines a linear map $\p_p^{\nr}: \La_p \r \La_{p-1}$ to be the linearization of the following map on the generators of $\La_p$:
\[\p_p^{\nr}([x_0,\ldots,x_p]):=\sum_{i=0}^p(-1)^i[x_0,\ldots,\widehat{x_i},\ldots,x_p], \text{ for each elementary $p$-path } [x_0,\ldots,x_p]\in \La_p.\]
Here $\widehat{x_i}$ denotes omission of $x_i$ from the sequence. The maps $\p^{\nr}_\bullet$ are referred to as the \emph{non-regular boundary maps}. For $p=-1$, one defines $\p_{-1}^{\nr}:\La_{-1} \r \La_{-2}$ to be the zero map. One can then verify that $\p_{p+1}^{\nr}\circ \p_p^{\nr} = 0$ for any integer $p \geq -1$ \cite[Lemma 2.2]{grigor-hlgy}. It follows that $(\La_p,\p^{\nr}_p)_{p\in \Z_+}$ is a chain complex.

For notational convenience, we will often drop the square brackets and commas and write paths of the form $[a,b,c]$ as $abc$. We use this convention in the next example.

\begin{example} Let $X=\set{p}$ be a singleton. Then, because there is no restriction on repetition, we have:
\[\La_0(X) = \mb{K}[\{p\}], \quad \La_1(X) = \mb{K}[\{pp\}], \quad \La_2(X) = \mb{K}[\{ppp\}], \text{ and so on.}\]

Next let $Y=\set{a,b}$. Then we have:
\[\La_0(Y) = \mb{K}[\{a,b\}], \quad \La_1(Y) = \mb{K}[\{aa,ab,ba,bb\}], \text{ and so on.}\]
\end{example}

\begin{example}\label{ex:2-node-dnr}
We will soon explain the interaction between paths on a set and the edges on a digraph. To build intuition, first consider a digraph on a vertex set $Y=\{a,b\}$ as in Figure \ref{fig:2-node}:
\begin{figure}[h]
\begin{center}
\begin{tikzpicture}[every node/.style={font=\footnotesize}]
\begin{scope}[draw]
\node[circle,draw](A) at (2,0){$a$};
\node[circle,draw](B) at (5,0){$b$};
\end{scope}

\begin{scope}[draw]
\path[myarrow,draw,thick] (A) edge [bend left] (B);
\path[myarrow,draw,thick] (B) edge [bend left] (A);
\end{scope}

\end{tikzpicture}
\end{center}
\caption{A two-node digraph on the vertex set $Y=\set{a,b}$. Operations of $\p^{\nr}_\bullet$ and $\p_\bullet$ on this digraph are discussed in Examples \ref{ex:2-node-dnr} and \ref{ex:2-node-d}.}
\label{fig:2-node}
\end{figure}
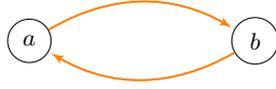

Notice that there is a legitimate ``path" on this digraph of the form $aba$, obtained by following the directions of the edges. But notice that applying $\p_2^{\nr}$ to the 2-path $aba$ yields $\p_2^{\nr}(aba)= ba - aa + ab$, and $aa$ is not a valid path on this particular digraph. To handle situations like this, one needs to consider \emph{regular paths}, which are explained in the next section.

\end{example}

\subsubsection{Regular paths}

For each $p\in \Z_+$, an elementary $p$-path $[x_0, \ldots, x_p]$ is called \emph{regular} if $x_i\neq x_{i+1}$ for each $0\leq i\leq p-1$, and \emph{irregular} otherwise. Then for each $p \in \Z_+$, one defines:
\begin{align*}
\mc{R}_p=\mc{R}_p(X,\mathbb{K})&:=\mb{K}\big[\set{[x_0,\ldots, x_p]: [x_0,\ldots, x_p]\text{ is regular}}\big]\\
\mc{I}_p=\mc{I}_p(X,\mathbb{K})&:=\mb{K}\big[\set{[x_0,\ldots, x_p]: [x_0,\ldots, x_p]\text{ is irregular}}\big].
\end{align*}

One can further verify that $\p_p^{\nr}(\mc{I}_p) \subseteq \mc{I}_{p-1}$ \cite[Lemma 2.6]{grigor-hlgy}, and so $\p_p^{\nr}$ is well-defined on $\La_p/\mc{I}_p$. Since $\mc{R}_p \cong \La_p/\mc{I}_p $ via a natural linear isomorphism, one can define $\p_p:\mc{R}_p \r \mc{R}_{p-1}$ as the pullback of $\p_p^{\nr}$ via this isomorphism \cite[Definition 2.7]{grigor-hlgy}. Then $\p_p$ is referred to as the \emph{regular boundary map} in dimension $p$, where $p\in \Z_+$. Now we obtain a new chain complex $(\mc{R}_p,\p_p)_{p\in \Z_+}$.

\begin{example}\label{ex:2-node-d} Consider again the digraph in Figure \ref{fig:2-node}. Applying the regular boundary map to the 2-path $aba$ yields $\p_2(aba)=ba + ab$. This example illustrates the following general principle: 
\begin{center}
\textit{Irregular paths arising from an application of $\p_\bullet$ are treated as zeros.}
\end{center}

\end{example}

\subsubsection{Paths on digraphs}

We now expand on the notion of paths on a set to discuss paths on a digraph. We follow the intuition developed in Examples \ref{ex:2-node-dnr} and \ref{ex:2-node-d}.

Let $G=(X,E)$ be a digraph. For each $p\in \Z_+$, one defines an elementary $p$-path $[x_0,\ldots, x_p]$ on $X$ to be \emph{allowed} if $(x_i,x_{i+1})\in E$ for each $0\leq i\leq p-1$. For each $p\in \Z_+$, the free vector space on the collection of allowed $p$-paths on $(X,E)$ is denoted $\mc{A}_p=\mc{A}_p(G)=\mc{A}_p(X,E,\mathbb{K})$, and is called the \emph{space of allowed $p$-paths}. One further defines $\mc{A}_{-1}:=\mathbb{K}$ and $\mc{A}_{-2}:=\set{0}$. 

\begin{example}\label{ex:3-node-allowed}
Consider the digraphs $G_M, G_N$ in Figure \ref{fig:3-node}.

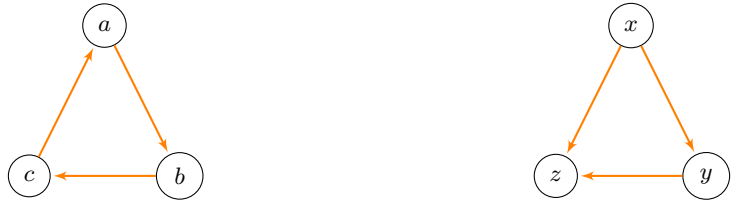
\begin{figure}[h]
\begin{center}
\begin{tikzpicture}[every node/.style={font=\footnotesize}]
\begin{scope}[draw]
\node[circle,draw](A) at (1,2){$a$};
\node[circle,draw](C) at (0,0){$c$};
\node[circle,draw](B) at (2,0){$b$};
\end{scope}

\begin{scope}[draw,xshift=7cm]
\node[circle,draw](X) at (1,2){$x$};
\node[circle,draw](Z) at (0,0){$z$};
\node[circle,draw](Y) at (2,0){$y$};
\end{scope}

\begin{scope}[draw]
\path[myarrow,draw,thick] (A) edge [] (B);
\path[myarrow,draw,thick] (B) edge [] (C);
\path[myarrow,draw,thick] (C) edge [] (A);
\end{scope}

\begin{scope}[draw]
\path[myarrow,draw,thick] (X) edge [] (Y);
\path[myarrow,draw,thick] (Y) edge [] (Z);
\path[myarrow,draw,thick] (X) edge [] (Z);
\end{scope}

\end{tikzpicture}
\end{center}
\caption{Two three-node digraphs $G_M, G_N$ on the vertex sets $M=\set{a,b,c}$ and $N=\set{x,y,z}$. The corresponding vector spaces of allowed paths are described in Example \ref{ex:3-node-allowed}.}
\label{fig:3-node}
\end{figure}
For $0\leq p \leq 3,$ we have the following vector spaces of allowed paths:
\begin{center}
\begin{tabular}{lll}
$\mc{A}_0(G_M)=\mb{K}[\set{a,b,c}]$ & & 
$\mc{A}_0(G_N)=\mb{K}[\set{x,y,z}]$\\
$\mc{A}_1(G_M)=\mb{K}[\set{ab,bc,ca}]$ & & 
$\mc{A}_1(G_N)=\mb{K}[\set{xy,yz,xz}]$\\
$\mc{A}_2(G_M)=\mb{K}[\set{abc,bca,cab}]$ & & 
$\mc{A}_2(G_N)=\mb{K}[\set{xyz}]$\\
$\mc{A}_3(G_M)=\mb{K}[\set{abca,bcab,cabc}]$ &\quad & 
$\mc{A}_3(G_N)=\set{0}$
\end{tabular}
\end{center}

Notice the following interesting situation: applying $\p_2^{G_M}$ to the regular 2-path $abc \in \mc{R}_2(G_M)$ yields $\p_2^{G_M}(abc) = bc - ac + ab \in \mc{R}_1(G_M)$. But whereas $abc$ is an allowed 2-path, $\p_2^{G_M}(abc)$ is \emph{not} allowed, because $ac \not\in \mc{A}_1(G_M)$. So in general, the map $\p_\bullet:\mc{R}_\bullet \r \mc{R}_{\bullet -1}$ does \emph{not} restrict to a map $\mc{A}_\bullet \r \mc{A}_{\bullet-1}$. On the other hand, one can verify that this restriction is valid in the case of $G_N$.  
\end{example}

The situation presented in Example \ref{ex:3-node-allowed} suggests the following natural construction. Given a digraph $G=(X,E)$ and any $p\in \Z_+$, the \emph{space of $\p$-invariant $p$-paths on $G$} is defined to be the following subspace of $\mc{A}_p(G)$:
\[\Om_p=\Om_p(G)=\Om_p(X,E,\mathbb{K}):=\set{ c \in \mc{A}_p: \p_p(c)\in \mc{A}_{p-1}}.\]
One further defines $\Om_{-1}=\mc{A}_{-1}\cong \mb{K}$ and $\Om_{-2}=\mc{A}_{-2}=\set{0}$. 

One can verify that $\im(\p_p(\Om_p))\subseteq \Om_{p-1}$ for any integer $p \geq -1$. Thus we have a chain complex:

\[\ldots \xr{\p_3} \Om_2 \xr{\p_2} \Om_1 \xr{\p_1} \Om_0 \xr{\p_0} \mb{K} \xr{\p_{-1}} 0 \]

For each $p\in \Z_+$, the \emph{$p$-dimensional path homology groups of $G=(X,E)$} are defined as:
\[H_p(G)= H_p(X,E,\mb{K}):=\ker(\p_p)/\im(\p_{p+1}).\]
The elements of $\ker(\p_p)$ are referred to as \emph{$p$-cycles}, and the elements of $\im(\p_{p+1})$ are referred to as \emph{$p$-boundaries}.

\begin{example}\label{ex:4-node-om} We illustrate the construction of $\Om_\bullet$ for the digraphs in Figure \ref{fig:4-node}.  

\begin{figure}[h]
\begin{center}
\begin{tikzpicture}[every node/.style={font=\footnotesize}]
\begin{scope}[draw]
\node[circle,draw](A) at (0,2){$a$};
\node[circle,draw](B) at (2,2){$b$};
\node[circle,draw](C) at (2,0){$c$};
\node[circle,draw](D) at (0,0){$d$};
\end{scope}

\begin{scope}[draw,xshift=7cm]
\node[circle,draw](W) at (0,2){$w$};
\node[circle,draw](X) at (2,2){$x$};
\node[circle,draw](Y) at (2,0){$y$};
\node[circle,draw](Z) at (0,0){$z$};
\end{scope}

\begin{scope}[draw]
\path[myarrow,draw,thick] (A) edge [] (B);
\path[myarrow,draw,thick] (C) edge [] (B);
\path[myarrow,draw,thick] (A) edge [] (D);
\path[myarrow,draw,thick] (C) edge [] (D);
\end{scope}

\begin{scope}[draw]
\path[myarrow,draw,thick] (W) edge [] (X);
\path[myarrow,draw,thick] (X) edge [] (Y);
\path[myarrow,draw,thick] (W) edge [] (Z);
\path[myarrow,draw,thick] (Z) edge [] (Y);
\end{scope}

\end{tikzpicture}
\end{center}
\caption{Two four-node digraphs $G_M, G_N$ on the vertex sets $M=\set{a,b,c,d}$ and $N=\set{w,x,y,z}$. The vector spaces $\Om_\bullet$ for each of these digraphs is discussed in Example \ref{ex:4-node-om}. In systems biology, $G_M, G_N$ are referred to as \emph{bi-fan} and \emph{bi-parallel motifs}, respectively \cite{milo2002network}. As we demonstrate in Example \ref{ex:4-node-om}, path homology is able to distinguish between these two motifs. We also show in Remark \ref{rem:4-node-comparison} that directed flag complex homology cannot tell these motifs apart.}
\label{fig:4-node}
\end{figure}
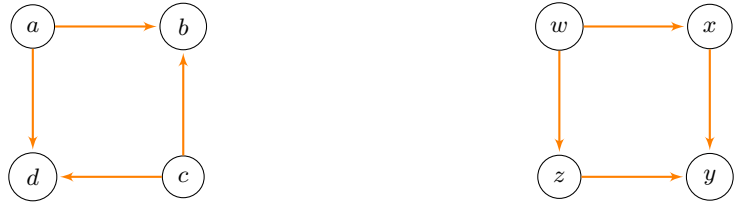

For $0\leq p \leq 2,$ we have the following vector spaces of $\p$-invariant paths:
\begin{center}
\begin{tabular}{lll}
$\Om_0(G_M)=\mb{K}[\set{a,b,c,d}]$ & & 
$\Om_0(G_N)=\mb{K}[\set{w,x,y,z}]$\\
$\Om_1(G_M)=\mb{K}[\set{ab,cb,cd,ad}]$ & & 
$\Om_1(G_N)=\mb{K}[\set{wx,xy,zy,wz}]$\\
$\Om_2(G_M)=\set{0}$ & & 
$\Om_2(G_N)=\mb{K}[\set{wxy-wzy}]$
\end{tabular}
\end{center}

The crux of the $\Om_\bullet$ construction lies in understanding $\Om_2(G_N)$. Note that even though $\p_2^{G_N}(wxy)$, $\p_2^{G_N}(wzy) \not\in \mc{A}_2(G_N)$ (because $wy \not\in \mc{A}_1(G_N)$), we still have:
\[\p_2^{G_N}(wxy-wzy) = xy-wy+wx - zy + wy - wz \in \mc{A}_1(G_N).\]

Let us now determine the 1-dimensional path homologies of $G_M$ and $G_N$. Observe that:
\begin{align*}
\p_1^{G_M}(ab -cb + cd -ad) &= b-a -b+c +d -c -d+a = 0,\\
\p_1^{G_N}(wx +xy - zy -wz) &= x-w +y-x -y +z -z+w = 0.
\end{align*}

One can then verify that 
\[\ker(\p_1^{G_M}) = \mb{K}[\{ab -cb + cd -ad\}] \neq \set{0} = \im(\p_2^{G_M}),\]
and also that 
\[\ker(\p_1^{G_N}) = \mb{K}[\{wx +xy - zy -wz\}] = \im(\p_2^{G_N}).\]

As a consequence, we obtain $\dim(H_1(G_M)) = 1$, and $\dim(H_1(G_N)) = 0$. Thus path homology can successfully distinguish between these two motifs.

\end{example}

\begin{remark}\label{rem:4-node-comparison} 
It is interesting to compare the path homologies just computed with the directed flag complex homology studied in \cite{dlotko2016topological, masulli2016topology, turner}, and to verify that the latter is \emph{not} able to distinguish between the bi-parallel and bi-fan motifs. Given a digraph $G=(X,E)$, the directed flag complex is defined to be the \emph{ordered} simplicial complex given by writing:
\[\mf{F}_G:=X \cup \set{(x_0,\ldots,x_p) : (x_i,x_j) \in E \text{ for all } 0\leq i < j \leq p}.\]
Here we use parentheses to denote ordered simplices. For the motifs in Figure \ref{fig:4-node}, we have:
\[\mf{F}_{G_M} = \set{a,b,c,d,ab,cb,cd,ad} \qquad \text{and} \qquad
\mf{F}_{G_N} = \set{w,x,y,z,wx,xy,wz,zy},\]
where we have omitted parentheses and commas to write terms of the form $(a,b)$ as $ab$. In particular, terms of the form $wxy, wzy$ are absent from $\mf{F}_{G_N}$. Due to the absence of these 2-simplices, one obtains---by computations similar to the ones presented for the path homology setting---that $\dim(H_1^{\mf{F}}(G_M)) = 1 = \dim(H_1^{\mf{F}}(G_N)).$ Here we have written $H_1^{\mf{F}}$ to signify applying homology to the respective directed flag complex. 
\end{remark}

\subsection{Digraph maps and functoriality}
\label{sec:dig-maps}

A \emph{digraph map} between two digraphs $G_X=(X,E_X)$ and $G_Y=(Y,E_Y)$ is a map $f:X \r Y$ such that for any edge $(x,x')\in E_X$, we have $f(x) \aeq f(x')$. Recall from Definition \ref{defn:aeq} that this means: 
\[\text{either }
f(x)=f(x'), \text{ or }
(f(x),f(x'))  \in E_Y.\]

To extend path homology constructions to a persistent framework, we need to verify the \emph{functoriality} of path homology. As a first step, one must understand how digraph maps transform into maps between vector spaces. 

Let $X,Y$ be two sets, and let $f:X\r Y$ be a set map. For each dimension $p\in \Z_+$, one defines a map $(f_* )_p: \La_p(X) \r \La_p(Y)$ to be the linearization of the following map on generators: for any generator $[x_0,\ldots, x_p]\in \La_p(X)$,
\[(f_*)_p([x_0,\ldots,x_p]):=[f(x_0),f(x_1),\ldots,f(x_p)]. \]
Note also that for any $p\in Z_+$ and any generator $[x_0,\ldots, x_p]\in \La_p(X)$, we have:
\begin{align*}
\big((f_*)_{p-1} \circ \p_p^{\nr}\big) ([x_0,\ldots,x_p]) &=\sum_{i=0}^p(-1)^i (f_*)_{p-1}\big([x_0,\ldots,\widehat{x_i},\ldots,x_p]\big)\\
&=\sum_{i=0}^p(-1)^i [f(x_0),\ldots,\widehat{f(x_i)},\ldots,f(x_p)]\\
&=\big(\p_p^{\nr}\circ (f_*)_p \big)([x_0,\ldots,x_p]).
\end{align*}
It follows that $f_*:=((f_*)_p)_{p\in \Z_+}$ is a chain map from $(\La_p(X),\p^{\nr}_p)_{p\in \Z_+}$ to $(\La_p(Y),\p^{\nr}_p)_{p\in \Z_+}$.

Let $p\in \Z_+$. Note that $(f_*)_p(\mc{I}_p(X)) \subseteq \mc{I}_p(Y)$, so $(f_*)_p$ descends to a map on quotients
\[(\widetilde{f}_*)_p: \La_p(X)/\mc{I}_p(X) \r \La_p(Y)/\mc{I}_p(Y)\] which is well-defined. For convenience, we will abuse notation to denote the map on quotients by $(f_*)_p$ as well. Thus we obtain an induced map $(f_*)_p: \mc{R}_p(X) \r \mc{R}_p(Y)$. Since $p\in \Z_+$ was arbitrary, we get that $f_*$ is a chain map from $(\mc{R}_p(X),\p_p)_{p\in \Z_+}$ to $(\mc{R}_p(Y),\p_p)_{p\in \Z_+}$. The operation of this chain map is as follows: for each $p\in Z_+$ and any generator $[x_0,\ldots, x_p]\in \mc{R}_p(X)$,

\[(f_*)_p([x_0,\ldots,x_p]):=\begin{cases}
[f(x_0),f(x_1),\ldots,f(x_p)] &: f(x_0),f(x_1),\ldots,f(x_p) \text{ are all distinct, and}\\
0 &: \text{otherwise.}
\end{cases}\]

We refer to $f_*$ as \emph{the chain map induced by the set map $f: X \r Y$.} 

Now given two digraphs $G_X=(X,E_X)$, $G_Y=(Y,E_Y)$ and a digraph map $f: G_X \r G_Y$, one may use the underlying set map $f:X \r Y$ to induce a chain map $f_*: \mc{R}_\bullet(X) \r \mc{R}_\bullet(Y)$. As one could hope, the restriction of the chain map $f_*$ to the chain complex of $\p$-invariant paths on $G_X$ maps into the chain complex of $\p$-invariant paths on $G_Y$, and moreover, is a chain map. We state this result as a proposition below, and provide a reference for the proof.

\begin{proposition}[Theorem 2.10, \cite{grigor-htpy}]  \label{prop:chain-map}
Let $G_X=(X,E_X), G_Y=(Y,E_Y)$ be two digraphs, and let $f:G_X \r G_Y$ be a digraph map. Let $f_*: \mc{R}_\bullet(X) \r \mc{R}_\bullet(Y)$ denote the chain map induced by the underlying set map $f:X \r Y$. Let $(\Om_p(G_X),\p^{G_X}_p)_{p\in \Z_+}$, $(\Om_p(G_Y),\p^{G_Y}_p)_{p\in \Z_+}$ denote the chain complexes of the $\p$-invariant paths associated to each of these digraphs. Then $(f_*)_p(\Om_p(G_X)) \subseteq \Om_p(G_Y)$ for each $p\in \Z_+$, and the restriction of $f_*$ to $\Om_\bullet(G_X)$ is a chain map. 
\end{proposition}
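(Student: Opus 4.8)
The plan is to separate the statement into two parts: (i) the inclusion $(f_*)_p(\Om_p(G_X)) \subseteq \Om_p(G_Y)$ for every $p \in \Z_+$, and (ii) the fact that the restriction of $f_*$ to $\Om_\bullet(G_X)$ is a chain map. I expect (ii) to be essentially free. The regular boundary maps $\p_p \colon \mc{R}_p \r \mc{R}_{p-1}$ do not depend on the digraph, and the operators $\p_p^{G_X}, \p_p^{G_Y}$ appearing in the two chain complexes of $\p$-invariant paths are merely the restrictions of this common $\p_p$ to $\Om_p(G_X)$ and $\Om_p(G_Y)$. Since $f_*$ is already known to be a chain map on $(\mc{R}_\bullet, \p_\bullet)$, the identity $\p_p \circ (f_*)_p = (f_*)_{p-1} \circ \p_p$ holds on all of $\mc{R}_p$, hence in particular on the subspace $\Om_p(G_X)$; so once (i) is in hand, the restricted maps automatically commute with the restricted boundaries. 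The whole content therefore lies in (i).

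The key lemma I would isolate first is that $f_*$ sends allowed paths to allowed paths, i.e. $(f_*)_p(\mc{A}_p(G_X)) \subseteq \mc{A}_p(G_Y)$ for each $p$. Because $\mc{A}_p$ is spanned by the allowed elementary paths, it suffices to check this on a generator $[x_0, \ldots, x_p] \in \mc{A}_p(G_X)$, so that $(x_i, x_{i+1}) \in E_X$ for $0 \le i \le p-1$. Applying the digraph map condition to each such edge gives $f(x_i) \aeq f(x_{i+1})$, i.e. either $f(x_i) = f(x_{i+1})$ or $(f(x_i), f(x_{i+1})) \in E_Y$. Two cases arise. If the image path $[f(x_0), \ldots, f(x_p)]$ is irregular, then $(f_*)_p$ sends it to $0 \in \mc{A}_p(G_Y)$ by construction, and there is nothing to prove. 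If the image path is regular, then $f(x_i) \ne f(x_{i+1})$ for every $i$, so the relation $f(x_i) \aeq f(x_{i+1})$ must be realized by a genuine edge $(f(x_i), f(x_{i+1})) \in E_Y$; hence $[f(x_0),\ldots,f(x_p)]$ is allowed and lies in $\mc{A}_p(G_Y)$. Linearity then yields the lemma.

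With the lemma available, (i) follows cleanly. Let $c \in \Om_p(G_X)$, so that $c \in \mc{A}_p(G_X)$ and $\p_p(c) \in \mc{A}_{p-1}(G_X)$. The lemma in dimension $p$ gives $(f_*)_p(c) \in \mc{A}_p(G_Y)$, the first defining condition for membership in $\Om_p(G_Y)$. For the second condition, I would invoke the chain-map identity on $\mc{R}_\bullet$ to write $\p_p\big((f_*)_p(c)\big) = (f_*)_{p-1}\big(\p_p(c)\big)$; since $\p_p(c) \in \mc{A}_{p-1}(G_X)$, the lemma in dimension $p-1$ shows the right-hand side lies in $\mc{A}_{p-1}(G_Y)$. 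Thus $(f_*)_p(c)$ is an element of $\mc{A}_p(G_Y)$ whose boundary lies in $\mc{A}_{p-1}(G_Y)$, i.e. $(f_*)_p(c) \in \Om_p(G_Y)$. Combined with the observation of the first paragraph, this establishes both claims.

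The step I expect to be the only real obstacle is the generator computation inside the key lemma, specifically the bookkeeping around degeneracies: when $f$ collapses two consecutive vertices, the image ceases to be regular and must be interpreted as $0$ in $\mc{R}_\bullet$. One must check that this ``send-to-zero'' convention is precisely what makes $f_*$ well defined on the quotient $\La_\bullet/\mc{I}_\bullet$ and simultaneously keeps the image inside $\mc{A}_\bullet(G_Y)$, so that no non-allowed path is ever produced. Once the regular-versus-irregular dichotomy is handled correctly on generators, the remaining reductions are formal and rely only on linearity and the already-established chain-map property of $f_*$ on $\mc{R}_\bullet$.
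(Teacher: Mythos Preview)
Your argument is correct. Note, however, that the paper does not actually supply a proof of this proposition: it states the result and defers to the cited reference \cite{grigor-htpy} for the proof. So there is no in-paper proof to compare against.

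That said, your approach is exactly the standard one and is essentially what one finds in the cited source: first establish $(f_*)_p(\mc{A}_p(G_X)) \subseteq \mc{A}_p(G_Y)$ on generators using the digraph-map condition together with the regular/irregular dichotomy, then bootstrap to $\Om_p$ using the chain-map identity on $\mc{R}_\bullet$, and finally observe that the chain-map property restricts for free. One small remark: in your case split you should phrase the ``send-to-zero'' case as ``$[f(x_0),\ldots,f(x_p)]$ is irregular (some \emph{consecutive} pair coincides),'' which you do correctly; the paper's displayed formula for $(f_*)_p$ is stated with ``all distinct,'' which is slightly stronger than what the quotient $\La_p/\mc{I}_p$ actually enforces, but your version is the right one and the argument goes through as you wrote it.
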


Henceforth, given two digraphs $G,G'$ and a digraph map $f:G \r G'$, we refer to the chain map $f_*$ given by Proposition \ref{prop:chain-map} as the \emph{chain map induced by the digraph map $f$.} Because $f_*$ is a chain map, we then obtain an induced linear map $(f_\#)_p: H_p(G) \r H_p(G')$ for each $p \in \Z_+$.

Having set up the necessary concepts, we now proceed to show that path homology is functorial.

\begin{proposition}[Functoriality of path homology]
\label{prop:functoriality-ph}
Let $G,G',G''$ be three digraphs.
\begin{enumerate}
\item Let $\id_G:G\r G$ be the identity digraph map. Then $(\id_{G \#})_p:H_p(G) \r H_p(G)$ is the identity linear map for each $p\in \Z_+$. 
\item Let $f: G\r G', g:G'\r G''$ be digraph maps. Then $((g \circ f)_\#)_p = (g_\#)_p \circ (f_\#)_p$ for any $p\in \Z_+$.
\end{enumerate}

\end{proposition}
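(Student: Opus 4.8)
The plan is to reduce both statements to functoriality at the level of chain maps, and then to invoke the standard fact that passing from a chain complex to its homology is itself functorial.

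First I would establish the compositional identities for the induced maps on path spaces, working on generators where everything is transparent. For any set map $f:X\r Y$ and generator $[x_0,\ldots,x_p]\in\La_p(X)$, the definition gives $(f_*)_p([x_0,\ldots,x_p])=[f(x_0),\ldots,f(x_p)]$. From this I can read off directly that $(\id_X)_*=\id_{\La_\bullet(X)}$ and, for a second set map $g:Y\r Z$, that $((g\circ f)_*)_p=(g_*)_p\circ(f_*)_p$, since $[g(f(x_0)),\ldots,g(f(x_p))]$ is computed the same way regardless of the order of application. Because these identities hold on the free path spaces $\La_\bullet$, and because each $(f_*)_p$ sends $\mc{I}_p(X)$ into $\mc{I}_p(Y)$ (as noted just before Proposition \ref{prop:chain-map}), they descend unchanged to the regular quotients $\mc{R}_\bullet$.

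Next I would pass to the $\p$-invariant subcomplexes. By Proposition \ref{prop:chain-map}, for digraph maps $f$ and $g$ the induced chain maps satisfy $(f_*)_p(\Om_p(G))\subseteq\Om_p(G')$ and $(g_*)_p(\Om_p(G'))\subseteq\Om_p(G'')$. Since the identity $(g\circ f)_*=g_*\circ f_*$ already holds on all of $\mc{R}_\bullet(X)$, it holds in particular on the subspace $\Om_\bullet(G)$, so the restriction of $(g\circ f)_*$ to $\Om_\bullet(G)$ equals the composite of the restrictions; similarly the restriction of $(\id_G)_*$ to $\Om_\bullet(G)$ is $\id_{\Om_\bullet(G)}$. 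One small point to record here is that $g\circ f$ is itself a digraph map, which is immediate from the definition of digraph map together with the $\aeq$ relation of Definition \ref{defn:aeq}, so that Proposition \ref{prop:chain-map} genuinely applies to the composite.

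Finally I would invoke the standard homological-algebra fact that a chain map $\ph$ induces $\ph_\#$ on homology with $(\id)_\#=\id$ and $(\psi\circ\ph)_\#=\psi_\#\circ\ph_\#$ \cite[p. 72]{munkres-book}. Applying this to the restricted chain maps on $\Om_\bullet$ yields $(\id_{G\#})_p=\id_{H_p(G)}$ and $((g\circ f)_\#)_p=(g_\#)_p\circ(f_\#)_p$, as claimed. I do not expect a genuine obstacle: the only thing requiring care is the bookkeeping of checking that the obvious compositional identities on $\La_\bullet$ survive both the passage to the regular quotient $\mc{R}_\bullet$ and the restriction to $\Om_\bullet$, and both of these descents are already handled by results recorded earlier in the excerpt.
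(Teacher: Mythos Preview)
Your proof is correct and follows the same overall strategy as the paper---verify the functoriality identities at the chain level and then pass to homology---but your execution differs in one small respect worth noting. The paper works directly on $\mc{R}_\bullet$ using the explicit formula for $(f_*)_p$ there, which forces a case split according to whether the images $g(f(x_0)),\ldots,g(f(x_p))$ are all distinct (so that the image path is regular) or not (so that one or both sides vanish). You instead check $(g\circ f)_*=g_*\circ f_*$ and $(\id)_*=\id$ on $\La_\bullet$, where no case analysis is needed because the formula there has no regularity clause, and then descend to $\mc{R}_\bullet$ via the quotient by $\mc{I}_\bullet$; since both sides are induced by maps preserving $\mc{I}_\bullet$, the identity passes to the quotient automatically. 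This is a minor but genuine simplification: your argument is uniform, while the paper's must track when paths become irregular under $f$ and $g$ separately. You also explicitly record that $g\circ f$ is itself a digraph map (so that Proposition~\ref{prop:chain-map} applies to the composite), a point the paper leaves implicit.
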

\begin{proof} Let $p\in \Z_+$. In each case, it suffices to verify the operations on generators of $\Om_p(G)$. Let $[x_0,\ldots, x_p] \in \Om_p(G).$ We will write $\id_{G*}$ to denote the chain map induced by the digraph map $\id_G$. First note that
\[(\id_{G *})_p([x_0,\ldots, x_p]) = [\id_G(x_0),\ldots,\id_G(x_p)] = [x_0,\ldots,x_p].\]
It follows that $(\id_{G*})_p$ is the identity linear map on $\Om_p(G)$, and thus $(\id_{G\#})_p$ is the identity linear map on $H_p(G)$. For the second claim, suppose first that $g(f(x_0)),\ldots,g(f(x_p))$ are all distinct. This implies that $f(x_0),\ldots,f(x_p)$ are also all distinct, and we observe:
\begin{align*}
((g\circ f)_*)_p([x_0,\ldots,x_p])&=
[g(f(x_0)),\ldots,g(f(x_p))] &&\text{assuming $g(f(x_i))$ all distinct}\\
&=
(g_*)_p([f(x_0),\ldots, f(x_p)]) &&\text{because $f(x_i)$ all distinct} \\
&=(g_*)_p\big((f_*)_p([x_0,\ldots,x_p])\big).
\end{align*}
Next suppose that for some $0\leq i\neq j\leq p$, we have $g(f(x_i))=g(f(x_j))$. Then we obtain:
\begin{align*}
((g\circ f)_*)_p([x_0,\ldots,x_p])=0=
(g_*)_p\big((f_*)_p([x_0,\ldots,x_p])\big).
\end{align*}
It follows that $((g\circ f)_*)_p=(g_*)_p \circ (f_*)_p$. The statement of the proposition now follows.\end{proof}

After discussing digraph maps, a natural question to ask is: ``Which digraph maps induce isomorphisms on path homology?" It turns out that one may define a notion of \emph{homotopy of digraphs}, which has the desirable property that homotopy equivalent digraphs have isomorphic path homologies. This is the content of the next section, and is a crucial ingredient for our proof of the stability of persistent path homology.

\subsection{Homotopy of digraphs}

Let $G_X=(X,E_X), G_Y=(Y,E_Y)$ be two digraphs. The \emph{product digraph} $G_X\times G_Y=(X\times Y,E_{X\times Y})$ is defined as follows: 
\begin{align*}
X \times Y &:= \set{(x,y) : x\in X, y\in Y}, \text{ and }\\
E_{X\times Y}&:=\set{((x,y),(x',y'))\in (X\times Y)^2 : x=x' \text{ and } (y,y')\in E_Y, \text{ or } y=y' \text{ and } (x,x')\in E_X}.
\end{align*}

Next, the \emph{line digraphs} $I^+$ and $I^-$ are defined to be the two-point digraphs with vertices $\set{0,1}$ and edges $(0,1)$ and $(1,0)$, respectively.

Two digraph maps $f,g : G_X \r G_Y$ are \emph{one-step homotopic} if there exists a digraph map $F:G_X\times I \r G_Y$, where $I\in \set{I^+,I^-}$, such that:
\[F|_{G_X\times \set{0}}=f \text{ and } F|_{G_X\times \set{1}}=g.\]
Observe that this condition is equivalent to requiring (recall the definition of $\aeq$ from Definition \ref{defn:aeq}):
\begin{equation}
f(x) \aeq g(x) \text{ for all } x\in X, \text{ or } 
g(x) \aeq f(x) \text{ for all } x\in X.\label{eq:htpy}
\end{equation}

Moreover, $f$ and $g$ are \emph{homotopic}, denoted $f\simeq g$, if there is a finite sequence of digraph maps $f_0=f,f_1,\ldots, f_n=g: G_X \r G_Y$ such that $f_i, f_{i+1}$ are one-step homotopic for each $0\leq i \leq n-1$. The digraphs $G_X$ and $G_Y$ are \emph{homotopy equivalent} if there exist digraph maps $f:G_X\r G_Y$ and $g:G_Y \r G_X$ such that $g\circ f \simeq \id_{G_X}$ and $f \circ g \simeq \id_{G_Y}$.

The concept of homotopy yields the following theorem on path homology groups:
\begin{theorem}[Theorem 3.3, \cite{grigor-htpy}]
\label{thm:htpy}
Let $G, G'$ be two digraphs.
\begin{enumerate}
\item Let $f,g:G \r G'$ be two homotopic digraph maps. Then these maps induce identical maps on homology vector spaces. More precisely, the following maps are identical for each $p\in \Z_+$:
\[(f_\#)_p:H_p(G)\r H_p(G') \qquad\qquad 
(g_\#)_p:H_p(G) \r H_p(G').\]
\item If $G$ and $G'$ are homotopy equivalent, then $H_p(G)\cong H_p(G')$ for each $p\in \Z_+$.
\end{enumerate}
\end{theorem}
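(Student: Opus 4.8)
The plan is to prove both parts by the standard homological device of \emph{chain homotopy}, adapted to digraphs through the product digraph and the line digraphs $I^+, I^-$. The key general facts are that chain-homotopic chain maps induce identical maps on homology, and that an isomorphism on homology follows formally once the relevant induced maps are mutually inverse. Given this, Part (2) is essentially immediate once Part (1) is available: if $G,G'$ are homotopy equivalent via $f:G\r G'$ and $g:G'\r G$ with $g\circ f\simeq \id_G$ and $f\circ g\simeq \id_{G'}$, then Part (1) gives $(g\circ f)_\#=(\id_G)_\#$ and $(f\circ g)_\#=(\id_{G'})_\#$ in each dimension, while Proposition~\ref{prop:functoriality-ph} gives $(g\circ f)_\#=g_\#\circ f_\#$, $(f\circ g)_\#=f_\#\circ g_\#$, and $(\id)_\#=\id$. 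Hence $f_\#$ and $g_\#$ are mutually inverse, so $H_p(G)\cong H_p(G')$ for each $p\in\Z_+$.

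For Part (1), I would first reduce to the one-step homotopic case. Since $f\simeq g$ means there is a finite chain $f=f_0,f_1,\ldots,f_n=g$ of digraph maps with consecutive terms one-step homotopic, and since equality of induced maps is transitive, it suffices to show that one-step homotopic maps induce the same map on homology. So assume $f,g:G_X\r G_Y$ are one-step homotopic, witnessed by a digraph map $F:G_X\times I\r G_Y$ with $I\in\set{I^+,I^-}$, $F|_{G_X\times\set{0}}=f$ and $F|_{G_X\times\set{1}}=g$. Let $i_0,i_1:G_X\r G_X\times I$ be the inclusions $x\mapsto(x,0)$ and $x\mapsto(x,1)$; these are digraph maps, and $F\circ i_0=f$, $F\circ i_1=g$. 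By the composition law of Proposition~\ref{prop:functoriality-ph}, at the chain level $f_*=F_*\circ(i_0)_*$ and $g_*=F_*\circ(i_1)_*$. It therefore suffices to show that $(i_0)_*$ and $(i_1)_*$ are chain homotopic as chain maps $\Om_\bullet(G_X)\r\Om_\bullet(G_X\times I)$: composing the homotopy operators on the left with the fixed chain map $F_*$ (which commutes with $\p$) again yields a chain homotopy, and $f_\#=g_\#$ follows.

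The heart of the argument, and the step I expect to be the main obstacle, is constructing this chain homotopy. The tool is a \emph{cross product} of paths: for the fundamental $1$-path $e\in\Om_1(I)$ (namely $e=[0,1]$ for $I^+$, with $\p_1 e=[1]-[0]$; the case $I^-$ is symmetric) one defines operators $L_p:\Om_p(G_X)\r\Om_{p+1}(G_X\times I)$ by $L_p(v):=v\times e$, where $v\times e$ is the signed alternating sum over the ``staircase'' paths in $G_X\times I$ that rise from level $0$ to level $1$ along $v$. Two facts must be proved: (i) that $v\times e$ genuinely lands in $\Om_{p+1}(G_X\times I)$, i.e.\ it is allowed and its boundary is again allowed; and (ii) the Leibniz-type identity $\p(v\times e)=(\p v)\times e+(-1)^p\,v\times(\p e)$. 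Granting these, and noting that $v\times[1]$ and $v\times[0]$ are exactly $(i_1)_*(v)$ and $(i_0)_*(v)$, the identity rearranges—after absorbing a sign by setting $\widetilde L_p:=(-1)^p L_p$—into the chain-homotopy relation $\p\circ\widetilde L_p+\widetilde L_{p-1}\circ\p=(i_1)_*-(i_0)_*$, using $(-1)^p+(-1)^{p-1}=0$.

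Establishing (i) and (ii) is where the combinatorics of $G_X\times I$ and the definition of $\Om_\bullet$ interact nontrivially: one must check that the alternating staircase sum is $\p$-invariant even though its individual terms need not be, mirroring the way $\Om_2(G_N)$ was assembled in Example~\ref{ex:4-node-om}. Concretely, I would first set up the cross product as an operator on all of $\mc{R}_\bullet$ and prove the Leibniz rule there by a direct (if tedious) index computation on generators; only then would I verify that it restricts correctly to $\p$-invariant paths, i.e.\ that it maps $\Om_\bullet(G_X)$ into $\Om_{\bullet+1}(G_X\times I)$. This last containment is the delicate point, since it is exactly what makes the operators $L_p$ well-defined into the intended chain complex and hence what makes the whole chain-homotopy argument go through.
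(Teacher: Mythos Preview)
The paper does not prove Theorem~\ref{thm:htpy}; it is quoted verbatim as Theorem~3.3 of \cite{grigor-htpy} and used as a black box in the stability argument. So there is no ``paper's own proof'' to compare against.

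That said, your proposal is a faithful outline of the proof given in \cite{grigor-htpy}: reduce Part~(2) to Part~(1) via functoriality, reduce Part~(1) to the one-step case, factor $f$ and $g$ through the inclusions $i_0,i_1:G_X\hookrightarrow G_X\times I$, and build a chain homotopy between $(i_0)_*$ and $(i_1)_*$ using the cross product $v\mapsto v\times e$ with the generating $1$-path $e$ on $I$. The two points you single out as the real work---the Leibniz formula $\partial(v\times e)=(\partial v)\times e+(-1)^p\,v\times(\partial e)$ and the fact that the cross product carries $\Omega_\bullet(G_X)$ into $\Omega_{\bullet+1}(G_X\times I)$---are exactly the substantive lemmas in \cite{grigor-htpy} (the latter is their K\"unneth-type result that $\Omega_p(G)\otimes\Omega_q(G')$ embeds in $\Omega_{p+q}(G\times G')$ via the cross product). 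Your instinct to first establish the Leibniz rule on all of $\mc{R}_\bullet$ and only then check the restriction to $\Om_\bullet$ is the right order of operations and matches how the cited paper proceeds.
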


The main construction of this paper---a \emph{persistent} framework for path homology and a proof of its stability---rely crucially on the preceding theorem. We are now ready to formulate our result.

\section{The Persistent Path Homology of a Network}
\label{sec:pph}

Let $(X,A_X) \in \Ncal$. For any $\d \in \R_+$, the digraph $\G^\d_X=(X,E^\d_X)$ is defined as follows:
\[E^\d_X:=\set{(x,x')\in X\times X: x\neq x',  A_X(x,x') \leq \d}.\]
Note that for any $\d'\geq \d \in \R_+$, we have a natural inclusion map $\G^\d_X \hr \G^{\d'}_X$. Thus we may associate to $(X,A_X)$ the \emph{digraph filtration} $\{\G^\d_X \hr \G^{\d'}_X\}_{\d\leq \d' \in \R_+}$.

The functoriality of the path homology construction (Proposition \ref{prop:functoriality-ph}) enables us to obtain a persistent vector space from a digraph filtration. Thus we make the following definition:
\begin{definition}
Let $(X,A_X)\in \Ncal$, and consider the digraph filtration $\{\G_X^\d \xhr{\iota_{\d,\d'}} \G_X^{\d'}\}_{\d\leq \d' \in \R_+}$. Then for each $p\in \Z_+$, we define the \emph{$p$-dimensional persistent path homology of $(X,A_X)$} to be the following persistent vector space:
\[\mc{H}_p(X,A_X):=\{H_p(\G_X^{\d}) \xr{(\iota_{\d,\d'})_\#} H_p(\G_X^{\d'})\}_{\d\leq \d'\in \R_+}.\]
We then define the \emph{p-dimensional path persistence diagram \emph{(PPD)} of $(X,A_X)$} to be the persistence diagram of $\mc{H}_p(X,A_X)$, denoted $\dgm_p(X,A_X)$.
\end{definition}

The main theorem of this section, which shows that the persistent path homology construction is stable to perturbations of input data, and hence amenable to data analysis, follows below:

\begin{theorem}[Stability] Let $(X,A_X), (Y,A_Y) \in \Ncal$. Let $p\in \Z_+$. Then, 
\[\db(\dgm_p(X,A_X),\dgm_p(Y,A_Y)) \leq 2\dn((X,A_X),(Y,A_Y)).\]
\end{theorem}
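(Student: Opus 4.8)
The plan is to invoke the Algebraic Stability Theorem (Theorem~\ref{thm:alg-stab}): since it bounds $\db$ by the interleaving distance, it suffices to prove that the persistent vector spaces $\mc{H}_p(X,A_X)$ and $\mc{H}_p(Y,A_Y)$ are $\eta$-interleaved, where $\eta := 2\dn((X,A_X),(Y,A_Y))$. Because $X$ and $Y$ are finite, the minimum in Proposition~\ref{prop:dn-ko} is attained, so I fix maps $\ph : X \r Y$ and $\psi : Y \r X$ for which $\dis(\ph)$, $\dis(\psi)$, $C_{X,Y}(\ph,\psi)$, and $C_{Y,X}(\psi,\ph)$ are all at most $\eta$. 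These two set maps will simultaneously supply the digraph maps between the filtrations and the interleaving homomorphisms, uniformly in the filtration parameter.

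First I would promote $\ph$ and $\psi$ to digraph maps between shifted digraphs. If $(x,x') \in E_X^\d$ then $A_X(x,x') \le \d$, so $\dis(\ph) \le \eta$ forces $A_Y(\ph(x),\ph(x')) \le \d + \eta$; hence either $\ph(x) = \ph(x')$ or $(\ph(x),\ph(x')) \in E_Y^{\d+\eta}$, i.e. $\ph(x) \aeq \ph(x')$ in $\G_Y^{\d+\eta}$. Thus $\ph$ is a digraph map $\ph^\d : \G_X^\d \r \G_Y^{\d+\eta}$, and symmetrically $\psi$ gives $\psi^\d : \G_Y^\d \r \G_X^{\d+\eta}$, for every $\d \in \R_+$. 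Applying path homology (Proposition~\ref{prop:chain-map}) I take the candidate interleaving maps to be $(\ph^\d_\#)_p$ and $(\psi^\d_\#)_p$. The two ``parallelogram'' squares of the interleaving, which relate these to the structure maps $(\iota_{\d,\d'})_\#$, commute immediately by functoriality (Proposition~\ref{prop:functoriality-ph}): on vertices the filtration inclusions are the identity, so the two composite digraph maps around each square agree with $\ph$ (resp. $\psi$) as set maps, and therefore induce the same linear map on $H_p$.

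The crux, and the step that genuinely uses the directed homotopy theory, is the pair of ``triangle'' identities: I must show that $(\psi^{\d+\eta}_\#)_p \circ (\ph^\d_\#)_p$ equals the structure map $H_p(\G_X^\d) \r H_p(\G_X^{\d+2\eta})$, together with the symmetric statement over $Y$. By functoriality the left-hand composite is induced by the digraph map $\psi \circ \ph : \G_X^\d \r \G_X^{\d+2\eta}$, whereas the structure map is induced by the inclusion $\iota^X_{\d,\d+2\eta}$, i.e. by $\id_X$; these set maps are \emph{not} equal, so I cannot conclude directly. Instead I would prove they are homotopic as digraph maps and invoke Theorem~\ref{thm:htpy}(1). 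Here the co-distortion bound does the work: evaluating $C_{X,Y}(\ph,\psi)$ on the pair $(x,\ph(x))$ and using $A_Y(\ph(x),\ph(x)) = 0$ gives $A_X(x,\psi(\ph(x))) \le \eta \le \d + 2\eta$, so $x \aeq \psi(\ph(x))$ holds in $\G_X^{\d+2\eta}$ for every $x$. This is exactly condition~\eqref{eq:htpy} for the pair $(\id_X,\ \psi\circ\ph)$, so the two maps are one-step homotopic and induce identical maps on $H_p$. The same computation with $C_{Y,X}(\psi,\ph)$ and the pair $(y,\psi(y))$ yields $A_Y(y,\ph(\psi(y))) \le \eta$ and the homotopy $\iota^Y_{\d,\d+2\eta} \simeq \ph \circ \psi$ over $Y$; note that both co-distortion terms are used, one per triangle, matching the remark that both are \emph{a priori} needed in the asymmetric setting.

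Assembling the two commuting parallelograms with the two homotopy-induced triangle identities shows that $\mc{H}_p(X,A_X)$ and $\mc{H}_p(Y,A_Y)$ are $\eta$-interleaved (after the routine extension of each filtration to an $\R$-indexed persistent vector space, constant for $\d<0$ since $\G_X^\d$ is then edgeless). Hence $\di(\mc{H}_p(X,A_X),\mc{H}_p(Y,A_Y)) \le \eta = 2\dn((X,A_X),(Y,A_Y))$, and Theorem~\ref{thm:alg-stab} gives the claimed bottleneck bound. I expect the triangle step to be the main obstacle: the rest is the standard interleaving-plus-functoriality template, but the failure of $\psi\circ\ph$ to equal the identity means the argument must pass through digraph homotopy invariance, which is precisely why the homotopy theory of \cite{grigor-htpy} (Theorem~\ref{thm:htpy}) was recalled.
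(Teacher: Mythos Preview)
Your proposal is correct and follows essentially the same approach as the paper's proof: both obtain $\ph,\psi$ from Proposition~\ref{prop:dn-ko}, promote them to digraph maps shifting the filtration by $\eta$, verify the parallelogram and triangle diagrams (the latter via one-step homotopy using the co-distortion bounds, then Theorem~\ref{thm:htpy}), and conclude by algebraic stability. Your treatment of the parallelograms is in fact slightly cleaner---you note the composites coincide as set maps and hence on homology by functoriality, whereas the paper phrases this as a one-step homotopy---but the overall structure is the same.
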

\begin{proof}
Let $\eta=2\dn((X,A_X),(Y,A_Y))$. By virtue of Proposition \ref{prop:dn-ko}, we obtain maps $\ph:X \r Y$ and $\psi:Y \r X$ such that $\dis(\ph)\leq \eta, \dis(\psi)\leq \eta$, and $C_{X,Y}(\ph,\psi), C_{Y,X}(\psi,\ph) \leq \eta$. 

\begin{claim} For each $\d \in \R_+$, the maps $\ph,\psi$ induce digraph maps as follows:
\begin{equation*}
\begin{aligned}[c]
\ph_\d: \G_X^{\d} &\r \G_Y^{\d+\eta}\\
X\ni x &\mapsto \ph(x)\in Y 
\end{aligned}
\qquad\qquad
\begin{aligned}[c]
\psi_\d: \G_Y^{\d} &\r \G_X^{\d+\eta}\\
Y\ni y &\mapsto \psi(y)\in X. 
\end{aligned}
\end{equation*}
\end{claim}
\begin{subproof}
Let $\d\in \R_+$, and let $(x,x')\in E_X^{\d}$. Then $A_X(x,x')\leq \d$. Because $\dis(\ph) \leq \eta$, we have $A_Y(\ph(x),\ph(x'))\leq \d+\eta$. Thus $(\ph(x),\ph(x'))\in E_Y^{\d+\eta}$, and so $\ph_\d$ is a digraph map. Similarly, $\psi_\d$ is a digraph map. Since $\d\in \R_+$ was arbitrary, the claim now follows. \end{subproof}

\begin{claim} Let $\d \leq \d'\in \R_+$, and let $s_{\d,\d'}, t_{\d+\eta,\d'+\eta}$ denote the digraph inclusion maps $\G_X^{\d} \hr \G_X^{\d'}$ and $\G_Y^{\d+\eta} \hr \G_Y^{\d'+\eta}$, respectively. Consider the following diagram of digraphs and digraph maps:
\[ \begin{tikzcd}[column sep=large]
\G^\d_X \arrow{r}{s_{\d,\d'}} \arrow[swap]{dr}{\ph_\d} & \G^{\d'}_X\arrow{dr}{\ph_{\d'}} & \\
&\G^{\d+\eta}_Y \arrow{r}{t_{\d+\eta,\d'+\eta}} & \G^{\d'+\eta}_Y
\end{tikzcd} \]
Then $\ph_{\d'}\circ s_{\d,\d'}$ and $t_{\d+\eta,\d'+\eta}\circ \ph_\d$ are homotopic, in particular one-step homotopic. 
\end{claim}
\begin{subproof} Let $x\in X$. We wish to show $\ph_{\d'}(s_{\d,\d'}(x))\aeq t_{\d+\eta,\d'+\eta}(\ph_\d(x))$. But notice that:
\[\ph_{\d'}(s_{\d,\d'}(x)) = \ph_{\d'}(x) = \ph(x),\]
where the second equality is by definition of $\ph_{\d'}$ and the first equality occurs because $s_{\d,\d'}$ is the inclusion map. Similarly, $t_{\d+\eta,\d'+\eta}(\ph_\d(x)) = t_{\d+\eta,\d'+\eta}(\ph(x))=\ph(x).$ Thus we obtain $\ph_{\d'}(s_{\d,\d'}(x))\aeq t_{\d+\eta,\d'+\eta}(\ph_\d(x))$. Since $x$ was arbitrary, it follows that $\ph_{\d'}\circ s_{\d,\d'}$ and $t_{\d+\eta,\d'+\eta}\circ \ph_\d$ are one-step homotopic. \end{subproof}

\begin{claim} Let $\d \in \R$, and let $s_{\d,\d+2\eta}$ denote the digraph inclusion map $\G_X^{\d} \hr \G_X^{\d+2\eta}$. Consider the following diagram of digraph maps:
\[ \begin{tikzcd}
\G_X^\d \arrow{rr}{s_{\d,\d+2\eta}} \arrow[swap]{dr}{\ph_{\d}} & {} &
\G_X^{\d+2\eta}\\
{} &
\G_Y^{\d+\eta} \arrow[swap]{ur}{\psi_{\d+\eta}} & 
{} 
\end{tikzcd} \]
Then $s_{\d,\d+2\eta}$ and $\psi_{\d+\eta} \circ \ph_\d$ are one-step homotopic.
\begin{subproof} Recall that $C_{X,Y}(\ph,\psi)\leq \eta$, which means that for any $x\in X,y\in Y$, we have:
\[|A_X(x,\psi(y))-A_Y(\ph(x),y))|\leq \eta.\]
Let $x\in X$, and let $y=\ph(x)$. Notice that $s_{\d,\d+2\eta}(x) = x$ and $\psi_{\d+\eta}(\ph_{\d}(x))=\psi(\ph(x))$. Also note:
\[A_X(x,\psi(\ph(x))) \leq \eta + A_Y(\ph(x),\ph(x))=\eta \leq \d+2\eta.\]
Thus $s_{\d,\d+2\eta}(x)\aeq \psi_{\d+\eta}(\ph_\d(x))$, and this holds for any $x\in X$. The claim follows. \end{subproof}
\end{claim}

By combining the preceding claims and Theorem \ref{thm:htpy}, we obtain the following, for each $p\in \Z_+$:
\[((s_{\d,\d+2\eta})_\#)_p = ((\psi_{\d+\eta} \circ \ph_\d)_\#)_p, \qquad \qquad
((\ph_{\d'}\circ s_{\d,\d'})_\#)_p = ((t_{\d+\eta,\d'+\eta}\circ \ph_\d)_\#)_p.\]
By invoking functoriality of path homology (Proposition \ref{prop:functoriality-ph}), we obtain:
\[((s_{\d,\d+2\eta})_\#)_p = ((\psi_{\d+\eta})_\#)_p \circ ((\ph_\d)_\#)_p, \qquad \qquad
((\ph_{\d'})_\#)_p \circ (s_{\d,\d'})_\#)_p = ((t_{\d+\eta,\d'+\eta})_\#)_p \circ ((\ph_\d)_\#)_p.\]
By using similar arguments, we can also obtain, for each $p\in \Z_+$,
\[((t_{\d,\d+2\eta})_\#)_p = ((\ph_{\d+\eta})_\#)_p \circ ((\psi_\d)_\#)_p, \qquad \qquad
((\psi_{\d'})_\#)_p \circ (t_{\d,\d'})_\#)_p = ((s_{\d+\eta,\d'+\eta})_\#)_p\circ ((\psi_\d)_\#)_p.\]
Thus $\mc{H}_p(X,A_X)$ and $\mc{H}_p(Y,A_Y)$ are $\eta$-interleaved, for each $p\in \Z_+$. Stability now follows by an application of Theorem \ref{thm:alg-stab}. \end{proof}

\section{Experiments}
\label{sec:exp}

We now present the results from four experiments where we computed 1-dimensional PPDs of simulated and real-world networks. All persistence computations were carried out in Matlab, using $\K=\R$ as the field of coefficients. In a forthcoming version of this paper, we will release the software and datasets used for our computations, as well as details on additional experiments.

\subsection{Transposition invariance} For each natural number $n \in [3,10]$, we generated 2000 dissimilarity networks with asymmetric weights chosen uniformly at random from the interval $[0,1]$. For each of these 16,000 directed networks, we computed both the 1-dimensional PPD of the original network, as well as the 1-dimensional PPD of the network with all the arrows reversed, i.e. the transpose of the underlying weight matrix. We then computed, for each network, the bottleneck distance between the 1-dimensional PPD of the original network and the 1-dimensional PPD of the transposed network. In each case, the bottleneck distance was equal to 0. Based on this experiment, we hypothesized that transposing a network has no effect on its 1-dimensional PPD. Guided by this computational insight, we were able to prove an even stronger statement: transposing a network has no effect on its $k$-dimensional PPD, for any $k\in \Z_+$. This is the content of Theorem \ref{thm:transp-inv}.

\subsection{Relationship between PPDs and Dowker persistence diagrams}
It was shown in \cite{dowker-arxiv} that the 1-dimensional \emph{Dowker persistence diagram} of a network is sensitive to asymmetry. Thus we were interested in the relationship between the 1-dimensional Dowker persistence diagram and the 1-dimensional PPD of a network. We found explicit examples of 4-node networks for which the two differ; one example is provided in Figure \ref{fig:pph-dowk-4}. But for each network in our database of 2000 random networks on 3 nodes as described above, the 1-dimensional Dowker persistence diagram agreed with the 1-dimensional PPD. This motivated us to formulate Conjecture \ref{conj:pph-dowk}.

\begin{figure}[h]
\begin{center}
\begin{tikzpicture}[every node/.style={font=\footnotesize}]
\begin{scope}[draw]
\node[circle,draw](1) at (-1.00,+1.50){$x_1$};
\node[circle,draw](2) at (+1.00,+1.75){$x_3$};
\node[circle,draw](3) at (+1.05,-0.25){$x_2$};
\node[circle,draw](4) at (-1.00,-0.50){$x_4$};
\end{scope}

\begin{scope}[draw]
\path[myarrow] (2) edge [] node[black,above,pos=0.4]{$2$} (1);
\path[myarrow] (2) edge [] node[black,right ,pos=0.4]{$4$} (3);
\path[myarrow] (3) edge [] node[black,below ,pos=0.4]{$1$} (4);
\path[myarrow] (4) edge [] node[black,right ,pos=0.6]{$3$} (1);

\end{scope}

\begin{scope}[xshift = 2 in, draw]
\node[circle,draw](1) at (-1.00,+1.50){$x_1$};
\node[circle,draw](2) at (+1.00,+1.75){$x_3$};
\node[circle,draw](3) at (+1.05,-0.25){$x_2$};
\node[circle,draw](4) at (-1.00,-0.50){$x_4$};

\path[myarrow] (2) edge [] node[black,above,pos=0.4]{$2$} (1);
\path[myarrow] (2) edge [] node[black,right ,pos=0.4]{$4$} (3);
\path[myarrow] (3) edge [] node[black,below ,pos=0.4]{$1$} (4);
\path[myarrow] (4) edge [] node[black,right ,pos=0.6]{$3$} (1);
\path[myarrow] (1) edge [bend right=40] node[black, left,pos=0.4]{$5$} (4);

\end{scope}

\begin{scope}[xshift=-1.5in]
\draw[step=0.5cm,color=gray] (-1.5,-0.5) grid (0.5,1.5);

\node at (-1.25,+1.75) {$x_1$};
\node at (-0.75,+1.75) {$x_2$};
\node at (-0.25,+1.75) {$x_3$};
\node at (+0.25,+1.75) {$x_4$};

\node at (-1.75,+1.25) {$x_1$};
\node at (-1.75,+0.75) {$x_2$};
\node at (-1.75,+0.25) {$x_3$};
\node at (-1.75,-0.25) {$x_4$};

\node at (-1.25,+1.25) {$0$};
\node at (-0.75,+1.25) {$9$};
\node at (-0.25,+1.25) {$11$};
\node at (+0.25,+1.25) {$5$};

\node at (-1.25,+0.75) {$8$};
\node at (-0.75,+0.75) {$0$};
\node at (-0.25,+0.75) {$6$};
\node at (+0.25,+0.75) {$1$};

\node at (-1.25,+0.25) {$2$};
\node at (-0.75,+0.25) {$4$};
\node at (-0.25,+0.25) {$0$};
\node at (+0.25,+0.25) {$10$};

\node at (-1.25,-0.25) {$3$};
\node at (-0.75,-0.25) {$7$};
\node at (-0.25,-0.25) {$6$};
\node at (+0.25,-0.25) {$0$};

\end{scope}

\end{tikzpicture}
\end{center}
\caption{\textbf{Left:} A network on four nodes for which the 1-dimensional path and Dowker persistence diagrams are not equal. The 1-dimensional PPD consists of a single point $(4,5)$, and the 1-dimensional Dowker persistence diagram consists of a single point $(4,6)$. \textbf{Middle, Right:} We illustrate the digraphs that appears at thresholds $\d=4$ and $\d=5$, respectively. Notice that the digraph on the right contains a copy of a bi-parallel motif, which we have already shown to have trivial 1-dimensional path homology in Example \ref{ex:4-node-om}.}
\label{fig:pph-dowk-4}
\end{figure}
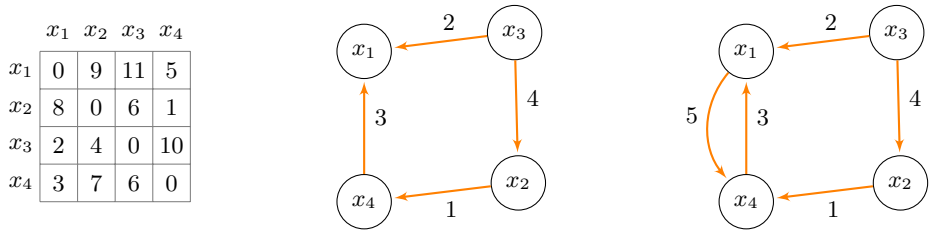

\subsection{PPDs of cycle networks}
For our next experiment, we consider a family of asymmetric networks called \emph{cycle networks}. The construction proceeds as follows. For each $n \in \N$, one considers the weighted, directed graph $(X_n,E_n,W_{E_n})$ with vertex set $X_n:=\set{x_1,x_2,\ldots,x_n}$, edge set $E_n:=\set{(x_1,x_2),(x_2,x_3),\ldots,(x_{n-1},x_n),(x_n,x_1)}$, and edge weights $W_{E_n}:E_n \r \R$ given by writing $W_{E_n}(e)=1$ for each $e\in E_n$. Next let $A_{G_n}:X_n\times X_n \r \R$ denote the shortest path distance induced on $X_n\times X_n$ by $W_{E_n}$. Then one defines the \emph{cycle network on $n$ nodes} as $G_n:=(X_n,A_{G_n})$. An example of a cycle network is illustrated in Figure \ref{fig:cyc6}.

In \cite[Theorem 22]{dowker-arxiv}, the authors proved a complete characterization of the 1-dimensional \emph{Dowker persistence diagrams} of cycle networks. From our experiments on cycle nodes having between 3 and 20 nodes, it appears that the 1-dimensional PPDs of cycle networks admit the same characterization result. This is expressed in Conjecture \ref{conj:pph-cycle}.

\subsection{U.S. economic sector data} \label{sec:us-econ} Each year, the U.S. Bureau of Economic Analysis (\url{https://www.bea.gov/}) releases two datasets---the ``make" and ``use" tables---that list the production of commodities by industries, as well as the flow and usage of these commodities across the supply chain \cite{econ}. Economic agencies then analyze this data to obtain a yearly review of the U.S. economy. We obtained use table data for 15 industries across the year range 1997-2015 from \url{https://www.bea.gov/industry/io_annual.htm} (last accessed December 15, 2016). Because this dataset shows the yearly (asymmetric) flow of commodities across industries, it forms a collection of 19 directed networks (one for each year between 1997 and 2015). As a proof-of-concept for our PPH method applied to a real dataset, we asked the following question: Can the PPDs obtained from this dataset capture a major economic event, such as the 2007-2008 financial crisis? 

The use table dataset consists of a list of 15 industries defined according to the 2007 North American Industry Classification System (NAICS), and the flow of 15 categories of commodities across these industries. In accordance with NAICS, the same labels are used for both industries and commodities, e.g. ``Mining" is a label for both the mining industry and the mining commodities. The use table is a $15\times 15$ matrix where the rows represent commodities, and the columns represent the industries which use the commodities. The sum of all the entries in a row is the total output of the corresponding commodity, and the sum of all the entries in a column is the total amount of product used by the corresponding industry. 

We preprocessed each use table via the following steps: (1) we removed the diagonal to focus on the use of \emph{external} commodities by each industry, (2) we normalized each column to sum to 1, thus obtaining the \emph{fraction} of total consumption that each commodity contributed to a single industry, and (3) we passed each nondiagonal element through the dissimilarity function $f(x) = 1-x$ to obtain a dissimilarity network. Thus we obtained a set of 19 dissimilarity networks, each corresponding to a year in the range 1997-2015. 

We then computed the 1-dimensional PPD of each of these 19 networks, and then computed a $19\times 19$ matrix of pairwise bottleneck distances. Then we applied single linkage hierarchical clustering to this matrix and obtained the dendrogram in Figure \ref{fig:econ}. Notice that the 1-dimensional PPD for 2008 is significantly distinct from those of all the other year ranges. This indicates precisely what one would expect, given the financial turmoil of 2007-2008 that triggered a global recession. Furthermore, it appears that all the years before and including 2006 form a coherent cluster when taking a vertical slice at resolution $0.01$. This cluster also includes 2015, which may indicate that the U.S. economy has finally stabilized (while contentious, some economic experts may consider this to be ground truth \cite{econ2015}). So it appears that PPDs can capture meaningful information from real-world asymmetric data, thus validating their use as a tool for analyzing directed networks.

\section{Discussion}

To our knowledge, persistent path homology is the most asymmetry-sensitive method of computing persistent homology of directed networks in the existing literature. At the same time, its construction is sufficiently distinct from existing simplicial constructions of persistent homology to merit an independent discussion of its characteristics. We consider this paper to be a first announcement of results in a research program devoted to studying both the computational and theoretical aspects of the PPH method. In a forthcoming update to this paper, we plan to provide a deeper theoretical study of this method. A software package for public use and demonstrations on additional real-world directed networks will be made available through \url{https://research.math.osu.edu/networks/}.

\medskip
\paragraph{\textbf{Acknowledgments}} This work was supported by NSF grants IIS-1422400 and CCF-1526513. Facundo M\'emoli was supported by the Mathematical Biosciences Institute at The Ohio State University. We thank Pascal Wild for correcting an error on an earlier version of this paper.

\bibliographystyle{alpha}
\bibliography{biblio}

\end{document}